\setlist[enumerate]{label=\arabic*.}
\newcommand{\N}{\mathbb{N}}
\newcommand{\C}{\mathbb{C}}
\newcommand{\R}{\mathbb{R}}
\newcommand{\EQUATION}{equation}
\newcommand{\beq}{\begin{\EQUATION}}
\newcommand{\eeq}{\end{\EQUATION}}
\newtheorem{theorem}{Theorem}
\newtheorem{lemma}[theorem]{Lemma}
\newtheorem{proposition}[theorem]{Proposition}
\theoremstyle{definition}
\newtheorem{definition}[theorem]{Definition}
\newtheorem{remark}[theorem]{Remark}
\newtheorem{example}[theorem]{Example}
\newcommand{\CC}{C}
\title{Quantitative singularity theory for random polynomials}
\author{Paul Breiding, Hanieh Keneshlou and Antonio Lerario}
\date{\today}
\thanks{Technische Universit\"at Berlin, Stra{\ss}e des 17. Juni 136, 10623 Berlin, Germany. \texttt{breiding@math.tu-berlin.de}}
\thanks{Max-Planck Institute for Mathematics in Sciences, Inselstr.22, 04103 Leipzig, Germany. \texttt{keneshlo@mis.mpg.de}}
\thanks{Scuola Internazionale Superiore di Studi Avanzati, Via Bonomea 265, 34136 Trieste, Italy. \texttt{lerario@sissa.it}}
\subjclass[2010]{}
\keywords{}
\begin{document}
\maketitle
\begin{abstract}
Motivated by Hilbert's 16th problem we discuss the probabilities of topological features of a system of random homogeneous polynomials. The distribution for the polynomials is the Kostlan distribution.
The topological features we consider are \emph{type-$W$ singular loci}. This is a term that we introduce and that is defined by a list of equalities and inequalities on the derivatives of the polynomials. In technical terms a type-$W$ singular locus is the set of points where the jet of the Kostlan polynomials belongs to a semialgebraic subset $W$ of the jet space, which we require to be invariant under orthogonal change of variables. For instance, the zero set of polynomial functions or the set of critical points fall under this definition.

We will show that, with overwhelming probability, the type-$W$ singular locus of a Kostlan polynomial is ambient isotopic to that of a polynomial of lower degree. As a crucial result, this implies that complicated topological configurations are rare. Our results extend earlier results from Diatta and Lerario who considered the special case of the zero set of a single polynomial.
Furthermore, for a given polynomial function $p$ we provide a deterministic bound for the radius of the ball in the space of differentiable functions with center $p$, in which  the $W$-singularity structure is constant. \end{abstract}
\section{Introduction}
Hilbert's 16th problem was posed by David Hilbert at the Paris ICM in 1900 and, in its general form, it asks for the study of the maximal number and the possible arrangement of the components of a generic real algebraic hypersurface of degree $d$ in real projective space. Since Hilbert had posed his question, many mathematicians have contributed to the subject: Hilbert \cite{Hilbert1891},  Rohn \cite{Rohn1913}, Petrovsky \cite{Petrovsky1938},
Rokhlin \cite{Rokhlin1978},
Gudkov \cite{Gudkov1978}, Nikulin \cite{Nikulin1980}, Viro \cite{Viro1982, Viro1986, Viro2008}, Kharlamov \cite{Kharlamov1978,Kharlamov1981,Kharlamov1984}, and more.

Hilbert's problem not only concerns the topology of the hypersurface but also the way it is embedded inside the projective space. The difference between these two sides of the problem can be illustrated by considering the sextic polynomials $P_1(x_0,x_1,x_2)=(x_1^4 +x_2^4 -x_0^4)(x_1^2 +x_2^2 -2x_0^2)+x_1^5x_2$  and $P_2(x_0,x_1,x_2)=10(x_1^4 +x_2^4 -x_0^4)(x_1^2 +x_2^2 -2x_0^2)+x_1^5x_2$ and and their zero sets, which are shown in Figure \ref{fig1}. Both of them have two connected components and so their topological types agree. But their rigid isotopy types are different, because one cannot move, in the projective plane, the inner oval on the left picture outside without crossing the outer oval. Being rigidly isotopic means that $P_1$ and $P_2$ belong to the same connected component of $\mathbb{R}[x_0, x_1, x_2]_{(6)}\backslash \Sigma$, where $\Sigma$ denotes the set of singular curves.

\begin{figure}[ht]
\begin{center}
\includegraphics[height = 3.2cm]{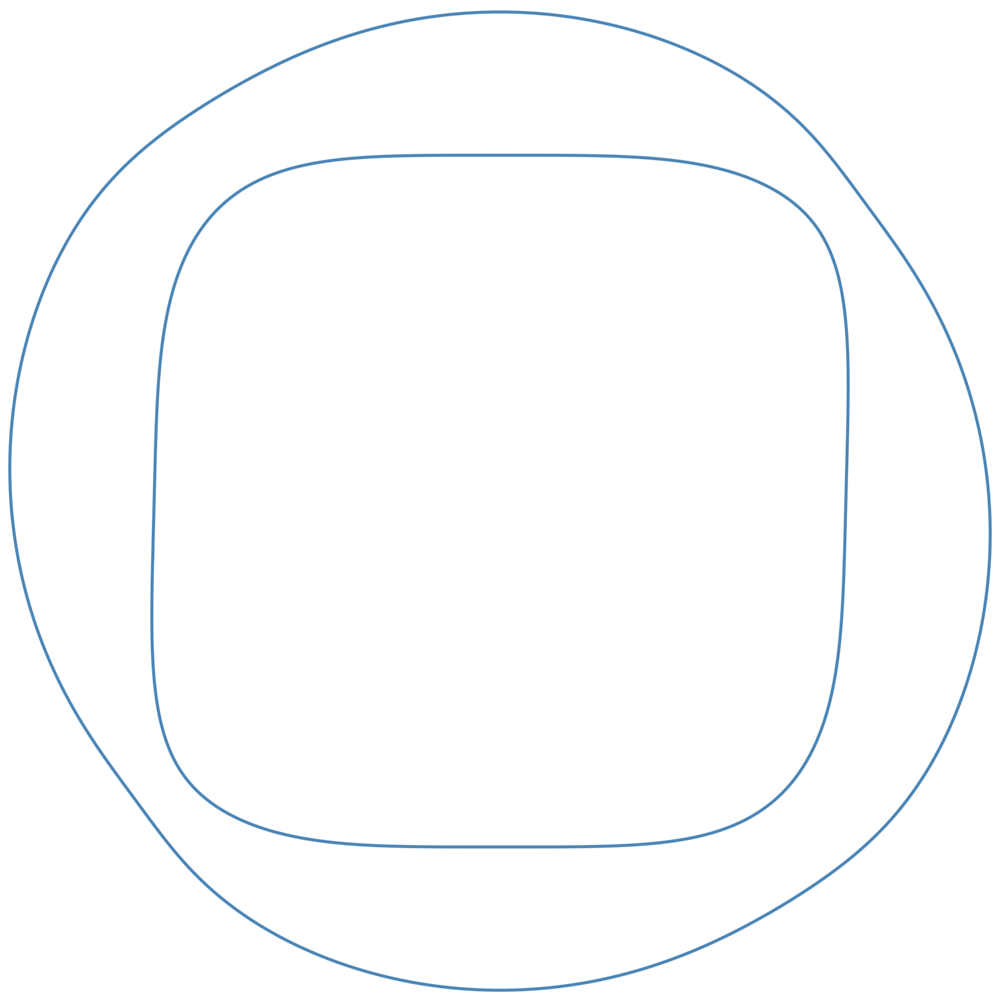}\hspace{3cm}\includegraphics[height = 3.2cm]{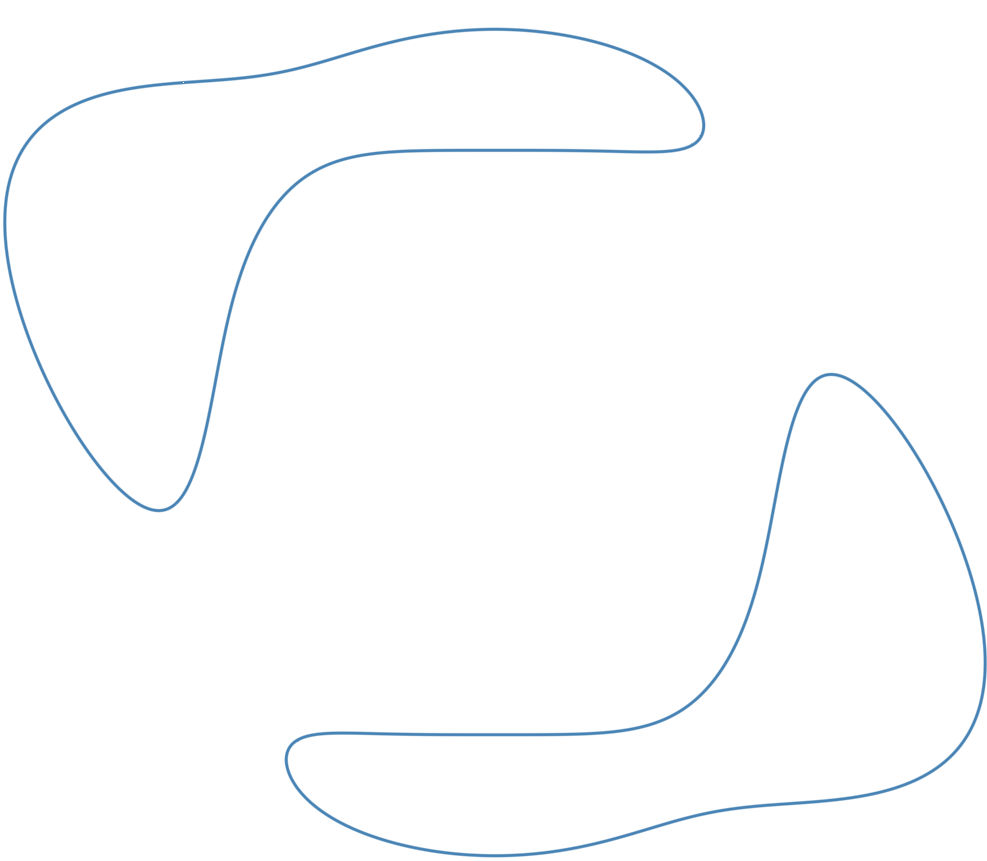}
\caption{The zero sets of $P_1$ (on the left) and $P_2$ (on the right) plotted on the affine chart $x_0=1$.\label{fig1}}
\end{center}
\end{figure}

In this article we approach this classical topic from a probabilistic point of view.
That is, we assume a probability distribution on the space of polynomials and consider \emph{statistical} properties of topological configurations. Moreover, we do not only consider the topology of zero sets. In fact, the case of the zero set of a single polynomial was already considered in \cite{Antonio}. Rather, we consider \emph{type-$W$ singular loci}. We give a rigorous definition below. Among others, those singular loci include
\begin{enumerate}[itemsep = 4pt]
\item[(1)] The zero set of $p:S^n\to \R^k$ in the unit sphere $S^n$.
\item[(2)] The set of critical points of $p:S^n\to \R$ on $S^n$.
\item[(3)] The set of nondegenerate {minima} of $p:S^n\to \R$ on $S^n$.
\item[(4)] The set of points where a polynomial map $p=(p_1, p_2):S^2\to \R^2$ has a Whitney cusp.
\end{enumerate}
Notice that in this list we have switched from the real projective space to the unit sphere. The reason is that polynomials define functions on the sphere, but (unless the degree is even) they do not define functions on the projective space. For this reason, in the following, we will exclusively consider loci inside the sphere $S^n$ and not in projective space; however we observe that, since the latter is double covered by the former, the study of the spherical case can be related to the projective one using standard algebraic topology techniques.

In this article, we follow the same philosophy of \cite{Antonio} and we focus on \emph{tail probabilities}. We want to show that a system of $m$ \emph{Kostlan polynomials} $(P_1,\ldots, P_m)$ rarely has a set of point from the list above that has ``complicated topology''. A kostlan polynomial of degree $d$ is defined as
\begin{equation}\label{Kostlan}
P_k(x_0,\ldots,x_n) = \sum_{\alpha_0+\cdots+\alpha_n=d} \xi_{\alpha_0,\ldots,\alpha_n}^{(k)} \,\sqrt{\tfrac{d!}{\alpha_0!\cdots \alpha_n!}}\, x_0^{\alpha_0}\cdots x_n^{\alpha_n},
\end{equation}
where the $\xi_{\alpha_0,\ldots,\alpha_n}^{(k)} $ are i.i.d.\ standard Gaussian random variables. By ``complicated topology'' we mean configurations that can't be realized by polynomials of lower degree.

We show in Theorem \ref{thm:main} below that with high probability the type-$W$ singular locus of a system of Kostlan polynomials $(P_1,\ldots, P_m)$ with maximal degree $d:=\max\{\deg(P_1),\ldots,\mathrm{deg}(P_m)\}$ is \emph{ambient isotopic} to the singular locus of a system of polynomials of degree approximatively $\sqrt{d\log d}$. By this we mean that there exists a continuous family of diffeomorphisms $\varphi_t:S^n\to S^n$ with $\varphi_0=\textrm{id}_{S^n}$ that at time $t=1$ brings the singular locus of the first system to the singular locus of the second one. The notion ambient isotopy is weaker than the notion rigid isotopy that Hilbert used. However, we can't work with rigid isotopies in our setting, because this is not defined for pairs of polynomials that live in different spaces -- on the one hand, polynomials of degree $d$ and on the other hand polynomials of degree $ \sqrt{d\log d}$. We need to compare those polynomials in the space of all $C^\infty$ functions!

In particular, our results also imply that \emph{maximal configurations}, i.e. type-$W$ singularities that can't be realized by polynomials of lower degree, have exponential small probability as $d\to\infty$. They are virtually non existent under the Kostlan distribution. This has implications for numerical experiments: for large $d$ it is impossible to find maximal configurations by sampling Kostlan polynomials. For zero sets this was already observed in \cite{KKPSS2017}.

Sarnak \cite{Sarnak} suggested using the probabilistc point of view in 2011. Since then the research area has seen much progress \cite{GaWe1, GaWe3, GaWe2, FLL, NazarovSodin1, NazarovSodin2, Sarnak, SarnakWigman,Lerariolemniscate, Letwo, Lerarioshsp, LeLu:gap, Antonio, LerarioStecconi}. Today, the merging of algebraic geometry and probability theory goes under the name of \emph{Random Algebraic Geometry}. The motivation behind taking a statistical point of view is that already for curves in the plane the number of rigid isotopy types grows super-exponentially as the degree of the curve goes to infinity \cite{OrevkovKharlamov}. Therefore, a deterministic enumeration of all possibilities is a hopeless endeavor. On the other hand it was shown in \cite{Antonio} that for \emph{Kostlan}  curves there are only few types that appear with significantly high probability. This result motivated the more general study in this article.

\subsection{The Kostlan distribution} Our choice of the Kostlan distribution (\ref{Kostlan}) has several reasons. The first is that Kostlan polynomials are invariant under orthogonal change of variables: if $P$ is a Kostlan polynomial in $n+1$ variables, then for any orthogonal $(n+1)\times (n+1)$-matrix $R$ we have $P \circ R \sim P$. We believe that a reasonable probability distribution should have this property, since we are considering topological features of \emph{geometric sets}. Following Klein \cite{Klein} those should be defined being invariant under orthogonal change of coordinates of the ambient space. However, Kostlan polynomials are not the only orthogonally invariant probability distribution. We need more reasons to justify this choice. A first possible one is that the Kostlan distribution is particularly suited for comparing real algebraic geometry with complex algebraic geometry: in fact if one considers complex Kostlan polynomials (defined by taking complex Gaussians in \eqref{Kostlan}), the resulting distribution is the unique gaussian distribution (up to multiples) which is invariant under unitary change of coordinates. Ultimately, this connection is why random real algebraic geometry behaves as the ``square root'' of complex algebraic geometry, see \cite{LerarioStecconi}. Moreover, up to multiples, the Kostlan distribution is the unique, among the orthogonally invariant ones, for which we can write a random polynomial as a linear combination of the monomials with independent gaussian coefficients -- thus it is ``simple'' to write a Kostlan polynomial.
Finally, another reason is that among the orthogonally invariant probability distributions on the space of polynomials the Kostlan distribution behaves well under a certain projection, which is be the key part in the proof of our main result Theorem \ref{thm:main} on the tail probabilities.

\subsection{Singularities}\label{sec:singularities}
The examples of singular loci above can be summarized under the following technical definition involving the \emph{jet space} $J^r(S^n, \R^m)$. We recall the precise definition of the jet space in Section~\ref{sec:jet_space}. One may think of points in $J^r(S^n, \R^m)$ as lists of derivatives of functions at a point. Those lists are called \emph{jets}. In this interpretation, each function $f\in C^r(S^n, \R^m)$ defines a map $j^r f :S^n\to J^{r}(S^n, \R^m)$, called the $r$-th jet prolongation, that maps $x$ to the list of derivatives of $f$ at $x$. The precise definition of this is given in Definition \ref{r_jet_prolongation} below.

The key aspect is that $J^r(S^n, \R^m)$ has a natural semialgebraic structure and we can therefore define semialgebraic sets therein.

\begin{definition}[The type-$W$ singular locus]\label{def:st}
We call a subset $W\subseteq J^{r}(S^n, \R^m)$ a \emph{singularity type}, if it is semialgebraic and invariant under diffeomorphisms induced by orthogonal change of variables. Given $f\in C^r(S^n, \R^m)$, the subset $U = j^r f^{-1}(W)\subseteq S^n$ is called the type-$W$ singular locus of $f$.
\end{definition}
The semialgebraic sets describing the above examples are as follows.
\begin{enumerate}[itemsep = 4pt]
\item[(1)] $W=S^n\times \{0\}\subset J^{0}(S^n, \R^m)$.
\item[(2)] $W=\{\mathrm{d}f=0\}\subset J^1(S^n, \R)$.
\item[(3)] $W=\{\mathrm{d}f=0, \mathrm{d}^2f>0\}\subset J^2(S^n, \R)$.
\item[(4)] $W\subset J^3(S^2, \R^2)$ gives conditions on the derivatives of $f:S^2\to \R^2$ up to order three, such that in some local coordinates $f$ has the form $(x_1, x_2)\mapsto(x_1, x_2^3-x_1x_2)$ (see \cite{Callahan}).
\end{enumerate}

\begin{definition}[Ambient isotopic pairs]\label{def:ad}Let $C_0, C_1$ be stratified subcomplexes of the sphere $S^n$. We say that the two pairs are \emph{ambient isotopic}, denoted
\beq (S^n, C_0)\sim (S^n, C_1),\eeq
if there exists a family of diffeomorphisms $(\varphi_t:S^n\to S^n)_{0\leq t\leq 1}$ with $\varphi_0=\mathrm{id}_{S^n}$ and $\varphi_1(C_0)=C_1.$
\end{definition}

\subsection{Organization of the article}
The rest of the article is now organized as follows. In the next section we state our main results. In Section \ref{sec:jet} we  recall the definition of jet space and use it for defining the discriminant locus of a singularity type. Then, in Section \ref{sec:norms_and_polys} we recall the definition of harmonic polynomials, the decomposition of the space of polynomials into the harmonic basis, and define several norms for polynomials, which will be used in the next sections. In Section \ref{sec:proof_thm_2} we prove our result on quantitative stability from Theorem \ref{thm:quantstab} and in Section~\ref{sec:low_degree_approx} we prove Theorem \ref{thm:main}. Finally, in Section \ref{sec:maximal_is_rare} we discuss what our results imply for maximal configurations.

\subsection{Acknowledgements}
This article was written in parts while the third author was on a visiting fellowship of the Max-Planck Institute for Mathematics in the Sciences in Leipzig. The authors also wish to thanks the anonymous referees, whose comments and suggestions improved the structure of the paper.

\section{Main Results}
Having clarified in the previous section what we mean by type-$W$ singular locus of a map, we can now state our main results.
\subsection{Main Result 1: Low-Degree Approximation}
The first result essentially tells that most real singularities given by polynomial equations of degree $d$ are ambient isotopic to singularities given by polynomials of degree $O(\sqrt{d \log d})$.  This means that for any singularity type $W$ the probability of the following event goes to one as $d\to \infty$: Let $p:S^n\to \R^m$ be given as the restriction to the sphere of a system of Kostlan polynomials. There exists a
polynomial $q$ of degree $O(\sqrt{d\log d})$ such that $(S^n, j^rq^{-1}(W))\sim (S^n, j^rp^{-1}(W))$. The new polynomial $q$ can be thought as a low-degree approximation of $p$.

The approximation procedure is constructive in the sense that one can read the approximating polynomial $q$ from a linear projection of the given one. It is also quantitative in the sense that the approximating procedure will hold for a subset of the space of polynomials with measure increasing very quickly to full measure as the degree goes to infinity.

To be more specific, we denote by $\mathcal{P}_{n,d}=\mathbb{R}[x_0, \ldots, x_n]_{(d)}$ the space of homogeneous polynomials of degree $d$. We  recall  from Section \ref{sec:harmonics} below that this space admits a decomposition:
\begin{equation}\label{eq:dec}\mathcal{P}_{n,d}=\bigoplus_{d-\ell\in 2\mathbb{N}}\|x\|^{d-\ell}\mathcal{H}_{n,\ell}\end{equation}
where $\mathcal{H}_{n, \ell}$ denotes the space of homogeneous, harmonic polynomials of degree $\ell$. Given $P\in \mathcal{P}_{n,d}$, we denote by $p:S^n\to \mathbb{R}$ its restriction to the unit sphere and for $L\in \{0, \ldots, d\}$ we define  $p|_{L}=\sum_{\ell\leq L,\, d-\ell\in 2\mathbb{N}} p_\ell,$
where $p_\ell$ is the restriction to $S^n$ of the polynomial appearing in the decomposition $P=\sum_{\ell}P_\ell$ given by \eqref{eq:dec}.
Given polynomials $p_1, \ldots, p_m$ with $\deg(p_i)=d_i$, we can form the polynomial map $p=(p_1, \ldots, p_m):S^n\to \mathbb{R}^m$ and for $L\in \{0, \ldots, d\}$ we can define:
\begin{equation}\label{eq:projmap} p|_L=(p_1|_{L}, \ldots, p_m|_{L}).\end{equation}
We will denote by $\underline{d}=(d_1, \ldots, d_m)$ and by $d=\max d_i$.
\begin{definition}[Low-degree approximation event]\label{def:stabilityevent} Let $W\subseteq J^{r}(S^n, \mathbb{R}^m)$ be a singularity type. Let $p=(p_1, \ldots, p_m)$ be a random polynomial map and $L\in \{0, \ldots, d\}$ . We can consider \begin{equation}A_L=\bigg\{(S^n, j^{r}p|_{L}^{-1}(W))\sim( S^n, j^{r}p^{-1}(W))\bigg\}\subset \mathcal{P}_{n, \underline{d}}\end{equation}
(i.e. the type-$W$ singularities of $p$ and $p|_L$ are ambient isotopic). We call $A_L$ the \emph{low-degree approximation event} for type-$W$ singularities.
\end{definition}

Here is our first main theorem.

\begin{theorem}[Low-degree approximation of type-$W$ singular locus]\label{thm:main}
Let $W\subseteq J^{r}(S^n, \R^m)$ be a singularity type (as defined in Definition \ref{def:st}). Let $P=(P_1, \ldots, P_m)$ be a system of
homogeneous Kostlan polynomials in $n+1$ many variables and of degrees $\underline{d}=(d_1, \ldots, d_m)$, and denote by $d=\max d_i$. Let $p=P|_{S^n}$ be the restriction of $P$ to the sphere $S^n$. For an integer $0\leq L\leq d$ let us denote by $A_L$ the event that the type-$W$ singular locus of $p$ and $p|_{L}$ are ambient isotopic (in the sense of Definition \ref{def:ad}).
Then, we have the following behavior for three different regimes of $L$:
\begin{enumerate}[itemsep = 4pt]
\item There exists $b_0>0$ such that for all $b\geq b_0$ there exist $a_1, a_2>0$ with the property that, choosing $L=b \sqrt{d\log d}$ we have $\mathbb{P}(A_L)\geq 1-a_1d^{-a_2}$.
\item For every $\tfrac{1}{2}<b<1$, there exist $a_1, a_2>0$ (with $0<a_2<1$), such that choosing $L=d^b$ we have $\mathbb{P}(A_L)\geq 1-a_1e^{-d^{a_2}}$.
\item For every $0<b<1$ there exist $a_1, a_2>0$ such that choosing $L=bd$ we have that $\mathbb{P}(A_L)\geq 1-a_1 e^{-a_2d}$.
\end{enumerate}
Conversely:
\begin{enumerate}[itemsep = 4pt, resume]
\item For all $a>0$, there exists $b>0$, such that choosing $L=b\sqrt{d\log d}$ we have $\mathbb{P}(A_L)\geq 1-d^{-a}$ for $d$ large enough.
\item For all $0<a<1$, there exists $0<b<1$, such that choosing $L= d^b$ we have for large enough $d$ that $\mathbb{P}(A_L)\geq 1-e^{-d^a}$.
\item For all $a>0$, there exists $b>0$, such that choosing $L= bd$ we have $\mathbb{P}(E_L)\geq 1-e^{-ad}$ for~$d$ large enough.
\end{enumerate}
\end{theorem}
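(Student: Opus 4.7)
My plan is to combine the deterministic quantitative stability result (Theorem \ref{thm:quantstab}) with a Gaussian concentration estimate on the $C^r$-norm of the high-frequency tail $p-p|_L$. By Theorem \ref{thm:quantstab}, there should be an explicit radius $\rho(p)>0$, depending on the $r$-jet data of $p$, such that every $q$ with $\|p-q\|_{C^r}<\rho(p)$ satisfies $(S^n,j^rq^{-1}(W))\sim(S^n,j^rp^{-1}(W))$. Applied to $q=p|_L$, this gives the inclusion of events
\begin{equation}
\bigl\{\|p-p|_L\|_{C^r}<\rho(p)\bigr\}\subseteq A_L,
\end{equation}
so it suffices to bound $\mathbb{P}\bigl(\|p-p|_L\|_{C^r}\geq\rho(p)\bigr)$. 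I would split this into two independent tasks: a lower tail bound on $\rho(p)$ and an upper tail bound on $\|p-p|_L\|_{C^r}$.

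For the stability radius, orthogonal invariance of the Kostlan distribution reduces the estimate of $\rho(p)$ to the size of the jet of $p$ at a fixed point, which is a centered Gaussian with covariance polynomial in $d$. Hence, with overwhelming probability one has $\rho(p)\geq d^{-c_0}$ for some constant $c_0$, and the exceptional event is absorbed by the target tail bounds. This step is essentially bookkeeping once Theorem \ref{thm:quantstab} is in hand.

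The heart of the argument is the tail bound on the random field $p-p|_L=\sum_{\ell>L,\,d-\ell\in 2\mathbb{N}}p_\ell$, which is a centered Gaussian in $C^r(S^n,\mathbb{R}^m)$. I would apply the Borell-TIS inequality to reduce matters to controlling the expectation $M_L:=\mathbb{E}\|p-p|_L\|_{C^r}$ and the associated weak variance. Using the harmonic decomposition \eqref{eq:dec}, the covariance kernel of each Kostlan component restricted to $S^n$ is $\langle x,y\rangle^{d_i}$, and its Gegenbauer expansion produces spectral weights on frequency $\ell$ of order $\binom{d_i}{\ell}2^{-d_i}$ up to polynomial factors. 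This mass is concentrated at $\ell\approx\sqrt{d}$ with Gaussian-type decay, so the $L^2$-expectation $\mathbb{E}\|p-p|_L\|_{L^2}^2$ decays roughly like $\exp(-c L^2/d)$ once $L\gg\sqrt{d}$; a Sobolev embedding on $S^n$ upgrades $L^2$ to $C^r$ at a loss polynomial in $d$.

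Combining the two estimates yields $\mathbb{P}(A_L)\geq 1-a_1\exp(-a_2 L^2/d)$ up to polynomial corrections in $d$. The three regimes in the theorem then correspond to plugging in $L=b\sqrt{d\log d}$, $L=d^b$, and $L=bd$: the first gives a polynomial tail $d^{-a_2}$, the second a stretched-exponential tail $\exp(-d^{a_2})$, and the third a genuine exponential tail $\exp(-a_2 d)$, matching items (1)--(3); items (4)--(6) are restatements in terms of the prescribed exponents. I expect the main obstacle to be the passage from the $L^2$ spectral estimate to a uniform $C^r$ bound on the sphere, because one has to control the derivatives of a Gaussian field with many independent frequency components; standard techniques (metric entropy on $S^n$ plus Dudley or direct Borell-TIS on the $C^r$ norm) should work, but matching the polynomial losses so as to preserve all three advertised tail rates requires a careful accounting of how the spectral weights interact with the Sobolev conversion.
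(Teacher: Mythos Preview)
Your overall architecture matches the paper: invoke Theorem~\ref{thm:quantstab} so that $A_L$ contains the event $\{\|p-p|_L\|_{C^{r+1}}<c_1\,\mathrm{dist}_{\mathrm{BW}}(P,\Sigma_{W,\underline{d}})\}$, then control the two sides separately. For the tail norm $\|p-p|_L\|_{C^{r+1}}$ your proposed route (Borell--TIS plus metric entropy) differs from the paper's, which instead uses a deterministic Sobolev inequality $\|p-p|_L\|_{C^{r+1}}\le c\sqrt{d}\,\|p-p|_L\|_{H^q}$ (Proposition~\ref{prop1}) and then a direct $L^2$--spectral estimate borrowed from \cite{Antonio}. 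Both approaches should deliver the $e^{-cL^2/d}$ rate; the paper's deterministic Sobolev step makes the polynomial losses explicit and avoids any chaining.

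The genuine gap is in your treatment of the stability radius. The assertion that ``orthogonal invariance reduces the estimate of $\rho(p)$ to the size of the jet of $p$ at a fixed point, which is a centered Gaussian'' is not correct. One has $\mathrm{dist}_{\mathrm{BW}}(P,\Sigma_{W,\underline{d}})=\inf_{x\in S^n}\mathrm{dist}_{\mathrm{BW}}(P,\Sigma_{W,\underline{d}}(x))$; orthogonal invariance makes the individual terms identically distributed but says nothing about their infimum over the uncountable sphere. Even at a single point, the distance from $P$ to the semialgebraic cone $\Sigma_{W,\underline{d}}(x_0)$ is not a Gaussian and is not governed by the size of $j^{r+1}p(x_0)$: it can be arbitrarily small while the jet is large. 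A purely Gaussian argument here would at best give a pointwise small-ball estimate, and turning that into a uniform lower bound on the infimum requires precisely the kind of global algebraic control you have not supplied.

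The paper's mechanism for this step is quite different and is the substantive input you are missing: Proposition~\ref{lemma:discdegree} shows that $\Sigma_{W,\underline{d}}$ is contained in the zero set of a polynomial of degree $O(d^u)$ on $\mathcal{P}_{n,\underline{d}}$, and then a tube--volume estimate from \cite{BuCu} yields $\mathbb{P}\{\|P\|_{\mathrm{BW}}\le s\,\mathrm{dist}_{\mathrm{BW}}(P,\Sigma_W)\}\ge 1-a_3 d^{a_2}/s$ (Proposition~\ref{prop:discnhood}). Combining this with the tail-norm bound via the intermediate quantity $\|P\|_{\mathrm{BW}}$ and optimizing over the free parameters produces the master estimate $\mathbb{P}(E_L)\ge 1-c_2 d^{c_3}L^{c_4}e^{-L^2/(3d)}$, from which items (1)--(6) follow by substitution exactly as you describe.
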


We call the second and the third regime in the theorem ``exponential rarefaction of maximal configurations'' because the result tells that maximal configurations, i.e. polynomials of degree~$d$ whose singular loci are not ambient isotopic to that of a polynomial of smaller degree, have exponentially small probability in the space of all polynomials. In order to get an insight of these results, we will provide some applications of them for the explicit case of the singularities described in Section \ref{sec:singularities}, in the last section of this paper.

\subsection{Main Result 2: Stable Neighborhoods and Quantitative Stability}
A key step in proving Theorem \ref{thm:main} is proving a deterministic bound on the size of the \emph{stable neighborhood} of $P$. In order to state the result we first give two definitions.
\begin{definition}\label{def:transversality}
Let $f\in C^{r+1}(S^n, \R^m)$, and $W\subseteq J^{r}(S^n,\R^m)$ be a singularity type. We fix a semialgebraic stratification $W=\coprod_{i=1}^k W_i$ into smooth semialgebraic strata. The $r$-jet map $ j^{r}f$ is called transversal to the stratum $ W_i $ if for all $ x\in S^n$ we either have $  j^{r}f(x)\notin W_i $, or
$$  j^{r}f(x)\in W_i\quad \text{and}\quad d_{x}(j^{r}f)(T_xS^n) +T_{j^{r}f(x)}W_i= T_{j^{r}f(x)}J^{r}(S^n,\R^m).$$
Here, $d_{x}(j^{r}f)$ is the differential of $j^rf$ (i.e. the induced map at the level of the tangent spaces).
The $r$-jet map $ j^{r}f$ is called transversal to $ W $ if it is transversal to all the strata of $W=\coprod_{i=1}^k W_i$. We write
$$ j^{r}f\pitchfork W $$
when $ j^{r}f $ is transversal to $ W $.
\end{definition}

Observe that we need $f\in C^{r+1}$ in order to talk about transversality of its $r$-jet to $W$: in fact, if $f\in C^{r+s}(S^n, \R^m)$, the jet extension $j^rf:S^n\to J^{r}(S^n, \R^m)$ is of class $C^s$ (see \cite[p.\ 61]{Hirsch}). Therefore, since the  transversality condition involves the differential of $j^rf$, we need this map to be at least $C^1$, i.e. $f$ to be at least $C^{r+1}$.

\begin{definition}[Stable neighborhood]\label{def:stable}
Let $W\subseteq J^{r}(S^n, \R^m)$ be singularity type, and let $f\in C^{r+1}(S^n,\R^m)$ with the property that $j^{r}f$ is transversal to $W$. The $W$-stable neighborhood of $f$ in $C^{r+1}(S^n,\R^m)$ is
$$\mathrm{SN}(f, W):=\{g\in C^{r+1}(S^n,\R^m) \mid \textrm{$f$ and $g$ are $W$-transversely homotopic}\}$$
(i.e. there is a homotopy $f_t$ in $C^{r+1}(S^n, \R^m)$ between $f_0=f$ and $f_1=g$ such that for every $t\in [0,1]$ the $r$-th jet extension $j^rf_t$ is transversal to $W$).

\end{definition}
Note the departure from polynomials to general $C^{r+1}$-functions in this definition. In general, it is infeasible to prove bounds on the size of a stable neighborhood. But, if $f$ is given by a polynomial $f=P\mid_{S^n}$, we can measure how non-degenerate its jet with respect to $W$ is. This measure is the distance between $P$ and the so-called discriminant (see also \eqref{def_poly_disc})
\begin{equation}\Sigma_{W,\underline{d}}=\{P\in \mathcal{P}_{n,\underline{d}} \mid \textrm{$j^{r}p$ is not transversal to $W$, where $p=P\mid_{S^n}$}\}.
\end{equation}
The distance measure we take is the \emph{Bombieri-Weyl distance} $\mathrm{dist}_\mathrm{BW}(\cdot, \cdot)$ from Section~\ref{sec:BW}. This distance is particulary tied to Kostlan polynomials. It is interesting to observe that both the approximation result from \cite{Antonio}, as well as our Theorem \ref{thm:main},  are very special of the Kostlan distribution. The reason for this is the behavior of this distribution under the projection onto the spaces of harmonic polynomials. In fact, the results from \cite{Antonio} are related to the more general problem: the estimation of the probability for the projection of $P$ to the subspace of {harmonic polynomials} of degree $L$ to be stable in the sense of Definition~\ref{def:stable}. In the process of proving Theorem~\ref{thm:main} we will also face this more general problem. The following theorem, which is our second main result, is the central piece in this process.

\begin{theorem}[Quantitative stability]\label{thm:quantstab}
There exists a constant $c_1>0$ (depending on $W$) such that, given $P\notin \Sigma_{W}$, and writing $p=P\mid_{S^n}$, if $d_1, \ldots, d_m\geq r+1$ then we have
$$
\{f
\in C^{r+1} (S^n, \mathbb{R}^m)\mid \|f-p\|_{C^{r+1}}<c_1\,\mathrm{dist}_\mathrm{BW}(P, \Sigma_{W,\underline{d}})\} \subseteq \mathrm{SN}(p,W).
$$
\end{theorem}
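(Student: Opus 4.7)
The plan is to introduce a pointwise quantitative transversality measure $\tau(f)\ge 0$ for $f\in C^{r+1}(S^n,\R^m)$, prove that it is Lipschitz with respect to the $C^{r+1}$-norm, and show that for polynomials it is bounded below by a constant times the Bombieri-Weyl distance to the discriminant. With these two ingredients the stable neighborhood bound follows from the straight-line homotopy $f_t=(1-t)p+tf$. Concretely, fix the semialgebraic stratification $W=\coprod_i W_i$ of Definition \ref{def:transversality}. For $x\in S^n$ and each stratum $W_i$ set
\[
  \tau_i(f,x)=\begin{cases}\mathrm{dist}\bigl(j^{r}f(x),\overline{W_i}\bigr) & \text{if } j^{r}f(x)\notin W_i,\\[2pt] \sigma_{\min}\!\bigl(\pi_{W_i}\circ d_x(j^{r}f)\bigr) & \text{if } j^{r}f(x)\in W_i,\end{cases}
\]
where the distance and the projection $\pi_{W_i}$ onto the normal space of $W_i$ are taken with respect to a fixed orthogonally invariant Riemannian metric on $J^{r}(S^n,\R^m)$, and $\sigma_{\min}$ is the smallest singular value. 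Define $\tau(f)=\min_i\inf_{x\in S^n}\tau_i(f,x)$; semialgebraicity of $W$ and compactness of $S^n$ ensure the infimum is attained, and $j^{r}f\pitchfork W$ is equivalent to $\tau(f)>0$.

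For the Lipschitz estimate, note that $\|j^{r}f-j^{r}g\|_{C^0}$ and $\|d(j^{r}f)-d(j^{r}g)\|_{C^0}$ are both bounded by a constant $K_0=K_0(n,m,r)$ times $\|f-g\|_{C^{r+1}}$; the two cases defining $\tau_i$ are each $1$-Lipschitz in these data, so there exists $K_1$ with $|\tau(f)-\tau(g)|\le K_1\|f-g\|_{C^{r+1}}$. For the polynomial estimate, the key input is the reproducing kernel property of the Bombieri-Weyl product: for every fixed $x\in S^n$ and every degree $d_k\ge r+1$, the evaluation of the $(r+1)$-jet $P_k\mapsto j^{r+1}P_k(x)$ is a bounded linear surjection $\mathcal P_{n,d_k}\to J^{r+1}_x$ whose minimum-norm right inverse has operator norm at most $K_2$ uniformly in $x$ (the hypothesis $d_k\ge r+1$ is what gives surjectivity, and uniformity in $x$ follows from the orthogonal invariance of the BW-norm, which reduces the computation to a single basepoint). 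Given $P\notin\Sigma_{W,\underline d}$, at a minimizing pair $(x,i)$ for $\tau(p)$ we can construct a polynomial perturbation $Q\in\mathcal P_{n,\underline d}$ realizing the smallest $J^{r+1}_x$-displacement that destroys transversality with $W_i$ at $x$, with $\|Q\|_{BW}\le K_2\,\tau(p)$. Then $P-Q\in\Sigma_{W,\underline d}$, so $\mathrm{dist}_{BW}(P,\Sigma_{W,\underline d})\le K_2\,\tau(p)$, i.e.\ $\tau(p)\ge c_0\,\mathrm{dist}_{BW}(P,\Sigma_{W,\underline d})$ with $c_0=1/K_2$.

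To finish, choose $c_1=c_0/(2K_1)$. If $\|f-p\|_{C^{r+1}}<c_1\,\mathrm{dist}_{BW}(P,\Sigma_{W,\underline d})$, then the straight-line homotopy satisfies $\|f_t-p\|_{C^{r+1}}\le\|f-p\|_{C^{r+1}}$ for every $t\in[0,1]$, so by the Lipschitz bound
\[
\tau(f_t)\;\ge\;\tau(p)-K_1\|f_t-p\|_{C^{r+1}}\;>\;c_0\,\mathrm{dist}_{BW}(P,\Sigma_{W,\underline d})-\tfrac{c_0}{2}\,\mathrm{dist}_{BW}(P,\Sigma_{W,\underline d})\;>\;0,
\]
so $j^{r}f_t\pitchfork W$ throughout the homotopy and $f\in\mathrm{SN}(p,W)$. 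The main obstacle is the quantitative reproducing-kernel step: one must produce a uniform-in-$x$ bound on the right inverse of $(r+1)$-jet evaluation in Bombieri-Weyl norm, and then lift the pointwise jet-space perturbation to a polynomial perturbation that lands exactly on the appropriate stratum of the discriminant, which requires some care with the stratified semialgebraic structure of $W$ and is where the assumption $d_i\ge r+1$ and the orthogonal invariance of the BW product are both essential.
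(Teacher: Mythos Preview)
Your overall framework---introduce a quantitative transversality measure, prove it is Lipschitz in $C^{r+1}$, and bound it from below by the Bombieri--Weyl distance to the discriminant---is the right one, and it is essentially what the paper does. However, the specific measure $\tau$ you define does not work. The problem is that whenever $j^{r}f$ actually meets a stratum $W_i$, your $\tau(f)$ is forced to vanish: if $j^{r}f(x_0)\in W_i$ and $x\to x_0$ through points with $j^{r}f(x)\notin W_i$, then $\tau_i(f,x)=\mathrm{dist}\bigl(j^{r}f(x),\overline{W_i}\bigr)\to 0$, so $\inf_{x}\tau_i(f,x)=0$ regardless of how transversal the intersection at $x_0$ is. (For a concrete instance take $W=S^n\times\{0\}\subset J^{0}(S^n,\R)$ and any $f$ with a nondegenerate zero.) Thus the equivalence ``$\tau(f)>0\iff j^{r}f\pitchfork W$'' is false, and more fatally your key inequality $\tau(p)\ge c_0\,\mathrm{dist}_{\mathrm{BW}}(P,\Sigma_{W,\underline{d}})$ fails for every transversal $P$ whose jet actually meets $W$. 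The same boundary mismatch also breaks the claimed Lipschitz continuity of $\tau_i$.

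The fix, and this is what the paper does, is to move one jet level up. For each $x\in S^n$ set
\[
Z_W(x)=\bigl\{j^{r+1}f(x)\ :\ j^{r}f\text{ is not transversal to }W\text{ at }x\bigr\}\subset J^{r+1}_x(S^n,\R^m),
\]
which is well defined because transversality at $x$ is a condition on $j^{r+1}f(x)$ alone, and take $\tau(f)=\inf_{x\in S^n}\mathrm{dist}\bigl(j^{r+1}f(x),Z_W(x)\bigr)$. This $\tau$ is genuinely $1$-Lipschitz in the $C^{r+1}$ norm and is positive exactly when $j^{r}f\pitchfork W$; indeed $\tau(f)\le\mathrm{dist}_{C^{r+1}}(f,\Sigma_W)$, which is the paper's inequality \eqref{eq:12}. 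The lower bound $\mathrm{dist}_{\mathrm{BW}}(P,\Sigma_{W,\underline{d}})\le c_1'\,\tau(p)$ then follows by exactly the mechanism you sketch: using orthogonal invariance one reduces to a fixed basepoint $e_0$, and there one builds $Q\in\Sigma_{W,\underline{d}}(e_0)$ by keeping all monomials of $P$ that do not influence the $(r+1)$-jet at $e_0$ and adjusting only the remaining ones, with $d_i\ge r+1$ guaranteeing surjectivity of the jet evaluation. So your Step~2 reasoning becomes correct once $\tau$ is redefined at the $(r+1)$-jet level rather than as a piecewise quantity at level $r$.
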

To appreciate the subtlety of the this result, we remark that the $f$ in the statement is not a polynomial, but rather can be any $C^{r+1}$-function. The space of polynomials of a given degree is finite dimensional, and as all norms on finite dimensional spaces are equivalent, this implies the existence of a constant $c>0$ for which the above statement holds for all \emph{polynomials} $f$; the crucial point here is that the estimate can be made uniform over the whole infinite-dimensional space of $C^{r+1}$ maps. And that the bound only depends on the distance of $p$ to the discriminant $\Sigma_{W,\underline{d}}$ \emph{within the space of polynomials}.

\section{Jet Spaces and Discriminants}\label{sec:jet}
In this section, we briefly introduce some notations and background facts on jet spaces and discriminants. We refer the reader to the textbooks \cite{Hirsch, Arnold} for more details and generalizations.

\subsection{Jet spaces}\label{sec:jet_space}

We recall now the definition of jet manifolds, following \cite{Hirsch}.
Given two smooth manifolds $N$ and $M$, an $r$-jet from $N$ to $M$ is an equivalence class of triples $(x,f,U)$ where $U\subset N$ is an open set, $x\in U$ and $f:U\to M$ is a $C^r$ map; the equivalence relation is: two pairs $(x, f, U)$ and $(y, g, V)$ are equivalent if and only if $x=y$ and in any pair of charts adapted\footnote{Given $f:N\to M$ and $x\in N$, we say that two charts $(C_x, \psi)$ (a chart on a neighborhood $C_x$ of $x$) and
$(B_{f(x)}, \varphi)$ (a chart on a neighborhood $B_{f(x)}$ of $f(x)$) are adapted to $f$ around $x$ if $f(C_x)\subset B_{f(x)}$.} to $f$ around $x$ the maps $f$ and $g$ have the same derivatives up to order $r$. The equivalence class of the triple $(x,f,U)$ is denoted by $j^rf(x)$ and called \emph{the $r$-jet of $f$ at $x$}; the point $x$ is called the \emph{source} of the jet and $f(x)$ the \emph{target}. The set of all $r$-jets from $N$ to $M$ is denoted by $J^r(N,M)$ and the set of all jets with source $x$ is denoted by $J_x^{r}(N, M).$

The most important cases in this paper are $N=\R^{n+1},M=\R^m$ and $N=S^n,M=\R^m$. The next example shows how think of the former.

\begin{example}\label{examp}
Let $N=\R^{n+1}$ and $M=\R^m$. We can represent an $r$-jet of $f:U\to \R^m$ at a point $x$ by the list of derivatives of $f$ at $x$. Therefore, the jet space $J^{r}(\mathbb{R}^{n+1},\mathbb{R}^{m})$ has an explicit manifold structure given by
\beq \label{easy_jet2}
J^{r}(\mathbb{R}^{n+1},\mathbb{R}^{m})\cong \mathbb{R}^{n+1}\times \bigoplus_{j=0}^{r} \R^{mN_j},\quad N_i={n+1+j\choose j},\eeq
such that
$$j^rf(x) = (x, f(x), Df(x),\ldots,D^kf(x),\ldots,  D^rf(x)),$$
where $D^kf(x)\in \R^{mN_j}$ is the \emph{tensor} of the order-$k$ partial derivatives of $f$ at $x$. That is, $J^{r}(\mathbb{R}^{n+1},\mathbb{R}^{m})$ can be seen as the vector bundle over $ \mathbb{R}^{n+1}$, where at each point $x$ we attach all derivatives of polynomials at $x$ up to degree $r$. Another useful interpretation is seeing the symmetric tensors as polynomials.
\beq \label{easy_jet}
J^{r}(\mathbb{R}^{n+1},\mathbb{R}^{m})\cong \mathbb{R}^{n+1}\times \bigoplus_{i=0}^{r} (\mathcal{P}_{n,i})^{\times m}.\eeq
such that $j^rf(x) = (x, f(x), f^{(1)}_1(x),\ldots, f^{(r)}(x))$, where
$$f^{(i)}(x) = \sum_{\alpha_0+\cdots+\alpha_n=i} \,\left[\left(\frac{\partial}{\partial y_0}\right)^{\alpha_0}\cdots \,\left(\frac{\partial}{\partial y_d}\right)^{\alpha_d}f(y)\right]\Biggm|_{y=x}\,x_0^{\alpha_0}\cdots x_n^{\alpha_n}.$$
\end{example}

The manifold structure on $J^{r}(N, M)$ is defined as follows. Given open charts $(U, \psi)$ and $(W, \varphi)$ on $N$ and $M$ respectively we have the bijection
\beq \theta:J^r(U, V)\to J^r(\psi(U), \varphi(V)), \quad j^rf(x)\mapsto j^r\left(\varphi \circ f\circ \psi^{-1}\right)(\psi(x)).\eeq
By \eqref{easy_jet2}, $ J^r(\psi(U), \varphi(V))$ is an open subset of a real vector space. We declare $(J^r(U,V), \theta)$ to be a chart on $J^r(N, M)$ and the set of all such charts gives an atlas, hence a differentiable structure, on $J^{r}(N, M)$. The  map $\theta$ gives local coordinates for the $r$-jet of $f$.

When $N$ and $M$ are real algebraic manifolds, the manifold charts on the jet space are real algebraic as well and the jet space $J^{r}(N, M)$ is also a real algebraic manifold; as a consequence we can define semialgebraic subsets therein, see \cite[Remark 3.2.15]{BCR:98}: $W\subset J^{r}(N, M)$ is semialgebraic if and only if $\theta(W\cap J^{r}(U, V))$ is semialgebraic for every chart $(J^r(U,V), \theta)$.

\begin{definition}\label{r_jet_prolongation}
Let $f:N\to M$, Its $r$-jet prolongation $j^rf:N\to J^r(N, M)$ is $j^rf:x\mapsto j^rf(x).$
\end{definition}
If $N$ and $M$ are smooth, the jet prolongation is smooth, see \cite[Chapter 2.4]{Hirsch}.

Now, we discuss how to think of the jet space $J(X,\R^m)$, where $X\hookrightarrow \R^{n+1}$ is a submanifold. Although the case $X=S^n$ is of main interest to us, it is illustrative to consider a general submanifold. First, we consider the subset of $J(\R^{n+1},\R^m)$ where the base points are points in $X$:
$$J^{r}(\mathbb{R}^{n+1},\mathbb{R}^{m})\vert_{X}\cong X\times \bigoplus_{j=0}^{r} \R^{mN_j}.$$
Then, we have the commutative diagram
\begin{equation}\label{diagram_rho}
\begin{tikzcd}
 J^{r}(\mathbb{R}^{n+1},\mathbb{R}^{m})\vert_{X} \arrow[rdd ] \arrow[rr,"\rho"] &  & {J^{r}(X,\mathbb{R}^{m})}\arrow[ldd] \\
  &  &  \\
  & X  &
 \end{tikzcd}
 \end{equation}
where $\rho$ projects the list of derivatives of a function $f$ at $x\in X$ to the list of derivatives restricted to $T_x X$. In the coordinates from \eqref{easy_jet} this is operator takes the following form. Let $(x,f(x),f^{(1)},\ldots,f^{(r)})\in\mathbb{R}^{n+1}\times \bigoplus_{i=0}^{r} (\mathcal{P}_{n,i})^{\times m}$ be a point representing the $r$-jet of a function $f$. Then,
\begin{equation}\label{rho_explicit}
\rho(x,f(x),f^{(1)},\ldots,f^{(r)}) = (x,f(x),f^{(1)}|_{T_xX},\ldots,f^{(r)}|_{T_xX}).
\end{equation}
That is, $\rho$ restricts the polynomial functions $f^{(i)}$ to  $T_xX$.

\subsection{The $W$-discriminant}\label{sec:W_discr}

Recall from Definition \ref{def:st} that we call $W\subseteq J^{r}(S^n, \R^m)$ a singularity type, if it is semialgebraic and invariant under orthogonal change of variables.  Recall also the notion of transversality to $W$ from Definition \ref{def:transversality}.

Now, we are ready to introduce the $W$-discriminant. It is important to realize that on the one hand $W$ is defined to be a subset of the $r$-th jet space, while on the other hand the associated discriminant lives in the space of $C^{r+1}$ functions!
\begin{definition}[The $W$-discriminant]
Let $ W\subseteq J^{r}(S^n,\mathbb{R}^m) $ be a singularity type.
$$\Sigma_W:= \lbrace f\in C^{r+1}(S^n,\mathbb{R}^m) \mid  j^{r}f  \text{ is not transversal to } W \rbrace \subset C^{r+1}(S^n,\mathbb{R}^m)$$
is called the $W$-discriminant.
\end{definition}
The $W$-discriminant for polynomial systems with degree pattern $\underline{d}$ is defined as
\beq \label{def_poly_disc}
\Sigma_{W,\underline{d}}:= \Sigma_W\cap \mathcal{P}_{n,\underline{d}}.
\eeq
It follows from the definition that
\beq\label{eq:decomposition}
{\Sigma}_{W}=\bigcup_{x\in S^n}{\Sigma}_{W}(x), \quad\text{and}\quad {\Sigma}_{W,\underline{d}}=\bigcup_{x\in S^n}{\Sigma}_{W,\underline{d}}(x).
\eeq
where $ {\Sigma}_{W}(x) $ is the set of $C^{r+1}$ functions~$f$ whose associated map $ j^{r}f$ is not transversal to $ W $ at the point $ x\in S^n $, and $\Sigma_{W,\underline{d}}(x):= \Sigma_W(x)\cap \mathcal{P}_{n,\underline{d}}$.

The discriminant $\Sigma_{W,\underline{d}}$ has the structure of a semialgebraic set with $\mathrm{codim} \Sigma_W \geq 1$.  Moreover, considering the natural induced action of the Orthogonal group $G:=O(n+1)$ on $J^{r}(S^n,\mathbb{R}^m)$ and $\Sigma_{W}$, since $ W $ is $G-$invariant, for every $ g\in G $ we have
\beq\label{eq:transformation}
g\cdot {\Sigma}_{W}(g^{-1}x)\cong {\Sigma}_{W}(x) \quad \text{and}\quad g\cdot {\Sigma}_{W,\underline{d}}(g^{-1}x)\cong {\Sigma}_{W,\underline{d}}(x).
\eeq

\subsection{The degree of the $W$-discriminant}
The next Lemma estimates the degree of the $W$-discriminant $\Sigma_{W,\underline{d}}\subset \mathcal{P}_{n, \underline{d}}$ as a function of $d=\max d_i$. The estimate is a polynomial in $d$. For example, when $\Sigma\subset \mathcal{P}_{n,d}$ is the discriminant for a polynomial having degenerate zero set, then its degree is $(n+1)(d-1)^{n}=O(d^{n})$. Later we will use this estimate on the degree for bounding the probability of a system of Kostlan polynomials to be close to the discriminant $\Sigma_{W,\underline{d}}$.
\begin{proposition}[Degree bound]\label{lemma:discdegree}
Let $W\subseteq J^{r}(S^n, \R^m)$ be a semialgebraic set  and $\underline{d}=(d_1,\ldots,d_m)$ with $d=\max d_i$. There exists a constant $u>0$, which depends on $W$,  and a nonzero polynomial $Q:\mathcal{P}_{n,\underline{d}}\to \R$ of degree bounded by $u\cdot  d^{u}$ such that $\Sigma_{W, \underline{d}}\subset Z(Q)$, where $Z(Q)$ is the zero set of $Q$.
\end{proposition}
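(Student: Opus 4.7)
My approach is to realize $\Sigma_{W,\underline{d}}$ as the image, under a linear projection, of an incidence semialgebraic set whose defining polynomials have degrees polynomial in $d$, and then appeal to effective elimination theory to extract a nonzero polynomial of controlled degree vanishing on $\Sigma_{W,\underline{d}}$.

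Concretely, I would consider
\[ \hat{\Sigma} := \bigl\{ (x,P) \in S^n \times \mathcal{P}_{n,\underline{d}} \,:\, j^{r}p \text{ is not transversal to } W \text{ at } x,\ \text{where } p = P|_{S^n}\bigr\}, \]
so that $\Sigma_{W,\underline{d}} = \pi(\hat{\Sigma})$ with $\pi:S^n\times \mathcal{P}_{n,\underline{d}}\to \mathcal{P}_{n,\underline{d}}$ the natural projection. I would fix a semialgebraic stratification $W = \coprod_i W_i$; each stratum $W_i$ is cut out by polynomial (in)equalities in the jet coordinates whose degrees are bounded by a constant $c_0=c_0(W)$ independent of $d$. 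The failure of transversality of $j^{r}p$ to $W_i$ at $x$ decomposes into (i) the membership condition $j^{r}p(x) \in W_i$, plus (ii) a rank-deficiency condition on $d_{x}(j^{r}p)$ relative to $T_{j^{r}p(x)} W_i$. Using the explicit coordinates of \eqref{easy_jet} and the restriction operator $\rho$ of \eqref{rho_explicit}, $j^{r}p(x)$ and its first-order derivative with respect to $x$ are polynomial in $(x,P)$, linear in $P$ and of degree at most $d$ in $x$. Consequently, (i) and (ii) translate into a Boolean combination of finitely many polynomial (in)equalities in $(x,P)$ --- the number depending on $W, n, m, r$ only --- of degrees bounded by $K \cdot d$ for some $K = K(W, n, m, r)$; the extra equation $\|x\|^2 = 1$ enforcing $x \in S^n$ is included as well.

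The second step is to eliminate the $x$-variables by invoking an effective Tarski--Seidenberg-type bound: the image, under the coordinate projection $\R^{n+1} \times \R^M \to \R^M$, of a semialgebraic set defined by finitely many polynomials of degree at most $D$ is itself semialgebraic, describable by polynomials whose degrees are at most $D^{c(n)}$, where the exponent depends only on $n$ and the number of defining polynomials. Applied to $\hat{\Sigma}$ with $D \leq K\cdot d$, this yields a semialgebraic description of $\Sigma_{W,\underline{d}}$ by polynomials on $\mathcal{P}_{n,\underline{d}}$ of degrees at most $u \cdot d^{u}$ for some $u = u(W)$. Since $\mathrm{codim}\,\Sigma_{W,\underline{d}} \geq 1$ as recalled in Section \ref{sec:W_discr}, at least one of these polynomials is not identically zero on $\mathcal{P}_{n,\underline{d}}$; taking $Q$ to be any such nonzero polynomial completes the proof.

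The main technical obstacle lies in the first step, where one must carefully translate the transversality condition --- in particular the rank-deficiency of $d_{x}(j^{r}p)$, expressed by the simultaneous vanishing of maximal minors of a matrix whose entries involve first-order derivatives of the jet coordinates --- into polynomial conditions with degree bounds linear in $d$. Tracking these degrees requires an application of the chain rule together with the explicit formulae for $\rho$ and the jet prolongation, but involves no conceptual difficulty. The subsequent effective quantifier-elimination step is then a routine invocation of a standard result from real algebraic geometry.
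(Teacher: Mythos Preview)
Your overall strategy---build an incidence set over $S^n\times\mathcal{P}_{n,\underline{d}}$ with defining data of degree $O(d)$, then project---is sound, and it differs from the paper's route (which instead exploits orthogonal invariance to reduce to a single point $x_0$, parametrizes by $g\in\mathrm{GL}(n+1)$, passes to $\mathbb{C}$, and controls the degree of the projection via Heintz's bounds on degrees of images of irreducible varieties). However, your final step contains a genuine gap.

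From effective Tarski--Seidenberg you obtain a description
\[
\Sigma_{W,\underline{d}}=\bigcup_{i}\bigcap_{j}\{q_{ij}\,*_{ij}\,0\},\qquad \deg q_{ij}\le D^{c(n)}\le u\,d^{u}.
\]
You then say: since $\mathrm{codim}\,\Sigma_{W,\underline{d}}\ge 1$, pick any nonzero $q_{ij}$ and set $Q=q_{ij}$. But a single $q_{ij}$ only controls one cell; there is no reason $\Sigma_{W,\underline{d}}\subset Z(q_{ij})$. The correct extraction is: each nonempty cell $C_i$ contributing to a set of empty interior must carry at least one equality $q_{i,j_i}=0$, and then $Q=\prod_i q_{i,j_i}$ vanishes on $\Sigma_{W,\underline{d}}$. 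The problem is the \emph{number of cells}. Standard singly-exponential quantifier elimination bounds (e.g.\ Basu--Pollack--Roy) give a number of output polynomials of order $(sD)^{O((n+1)N)}$, where $N=\dim\mathcal{P}_{n,\underline{d}}$ is the number of free variables. Since $N$ grows polynomially in $d$, the degree of the product $Q$ is not bounded by any $u\,d^{u}$.

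What is actually needed is a bound on the degree of the \emph{Zariski closure} of the projection, not merely a semialgebraic description of it. This is precisely why the paper replaces the semialgebraic incidence by an algebraic set $\hat{Z}$ (keeping only the equality constraints), complexifies, decomposes $\hat{Z}^{\mathbb{C}}$ into irreducible components, and invokes Heintz's lemma that $\deg(\pi(V))\le\deg(V)$ for irreducible $V$; the key dimension bound $\dim\hat{Z}^{\mathbb{C}}_k\ge (n{+}1)^2+N-\ell$ ensures only $\ell=O(1)$ many equations are effectively intersected, so the total degree is at most $(\alpha_2 d)^{\ell}$---crucially independent of $N$. Your argument can be repaired along the same lines (over your incidence $\hat\Sigma\subset S^n\times\mathcal{P}_{n,\underline{d}}$ instead of over the group), but the Tarski--Seidenberg step must be replaced by this algebraic-geometric degree control.
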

\begin{proof}
Fix a point $x_0\in S^n$. We first show that $\Sigma_{W, \underline{d}}(x_0)$ is a semialgebraic subset of $\mathcal{P}_{n,\underline{d}}$ defined by polynomials of degree bounded by some constant $\alpha_1>0$ depending on~$W$ only.

Recall that $\Sigma_{W}(x_0)\subset C^{r+1}(S^n,\mathbb R^m)$ is the set of $C^{r+1}$ functions whose $r$-th jet is not transversal to $W$ at the point $x_0$:
\beq \Sigma_{W}(x_0)=\{f\in C^{r+1}(S^n,\mathbb R^m) \mid j^rf(x_0)\in W \text{ and } \textrm{im}(d_{x_0}j^rf)+T_{j^rf(x_0)}W\neq T_{j^rf(x_0)}J^{r}(S^n, \R^m)\}.\eeq
Since $W$ is defined by semialgebraic conditions, we see that $\Sigma_W(x_0)$ is given by a semialgebraic condition on the list of the first $r+1$ derivatives (i.e. the $(r+1)$-jet) of $f:S^n\to \R^m$ at $x_0$ (the derivative of $j^rf$ involves $j^{r+1}f(x_0)$):
\begin{equation}\Sigma_W(x_0)=\left\{f\in C^{r+1}(S^n, \R^m)\,\bigg|\,  \bigcup_{i=1}^a\bigcap_{j=1}^{b_{i}} \widehat{s}_{i,j}(j^{r+1}f(x_0))*_{i,j}0\right\},
\end{equation}
where $a, b$ are some constants, $\widehat{s}_{i,j}$ are polyomials and $*_{i,j}\in \{<, =, >\}$; all these data depend on $W$ only.
Now, the map $\widehat{j}:\mathcal{P}_{n,\underline{d}}\to J_{x_0}^{r+1}(S^n, \R^m)$ associating to every polynomial map its $(r+1)$-jet at $x_0$ is linear. Moreover, $\Sigma_{W, \underline{d}}(x_0)=\Sigma_{W} (x_0)\cap \mathcal{P}_{n, \underline{d}}$. Taking $s_{i,j}:=\widehat{s}_{i,j}\circ \widehat{j}$
we can therefore write:
 \beq \Sigma_{W,\underline{d}}(x_0)=\bigcup_{i=1}^a\bigcap_{j=1}^{b_{i}} \{s_{i,j}*_{i,j}0\}.\eeq
We have $\mathrm{deg}(s_{i,j}) = \mathrm{deg}(\widehat{s}_{i,j})$, because $\widehat{j}$ is linear. This implies that the degrees of the polynomials $s_{i,j}$ are bounded by some constant $\alpha_1>0$ which only depends on $W$ (but not on $d$).

We now proceed with proving the claim of the proposition.
Let $I$ be the set of all the pairs $(i,j)$ such that $*_{i,j}$ equals ``$=$'' and consider the algebraic set
 \beq Z_{W,\underline{d}}(x_0)=\bigcup_{(i,j)\in I}\{s_{i,j}=0\} \subset \mathcal{P}_{n, \underline{d}}.\eeq
By construction, $\Sigma_{W,\underline{d}}(x_0)\subset Z_{W,\underline{d}}(x_0)$.
Since the $s_{i,j}$ are defined in terms of $W$ only, the cardinality of $I$ is bounded by some constant $\ell$ depending only on $W$.
By \eqref{eq:decomposition} we have $\Sigma_{W, \underline{d}} =\bigcup_{x\in S^n} \Sigma_{W, \underline{d}}(x)$ and therefore, by \eqref{eq:transformation}, we have
\beq \Sigma_{W,\underline{d}} =\bigcup_{g\in O(n+1)}g \cdot \Sigma_{W, \underline{d}}(x_0) \subseteq \bigcup_{g\in O(n+1)}g\cdot Z_{W, \underline{d}}(x_0).\eeq
Let us denote $Z:=\bigcup_{g\in O(n+1)}g\cdot Z_{W, \underline{d}}(x_0)$.
To finish the proof it is enough to find a polynomial $Q$ that vanishes on $Z$ and to bound its degree.

Let $G=\mathrm{GL}(\R^{n+1})$ be the general linear group, and $\rho:G\to \mathrm{GL}(\mathcal{P}_{n,\underline{d}})$ be its representation in the space of polynomial maps  given by change of variables. That is, $\rho(g)P(x) = P(g(x))$. The representation $\rho$ extends to a map between spaces of endomorphisms $\rho:\R^{(n+1)\times (n+1)}\to\R^{N\times N}$, simply by declaring $\rho(g)$ to be the linear map that to a polynomial $P$ associates the new polynomial $\rho(g)\cdot P=P(gx)$. We denote by $h_{i,j}:\R^{(n+1)\times (n+1)}\times\R^{N}\to \R$ the polynomial defined by
\beq h_{i,j}(g, P)=s_{i,j}(\rho(g)\cdot P),\eeq
and we define the incidence set
\beq {\hat{Z}}=\{(g, P)\in \R^{(n+1)\times (n+1)}\times \mathcal{P}_{n,\underline{d}} \mid \forall (i,j)\in I: h_{i,j}(g, P)=0 \}.\eeq
Since the components of the representation $\rho$ have degree at most $d$, and the degrees of the polynomials $s_{i,j}$ are all bounded by $\alpha_1$,
the degree of each $h_{i,j}$ is bounded by $\alpha_1 (d+1)\leq\alpha_2 d $, for some $\alpha_2>0$ (depending on $W$ only and not on $d$). Therefore ${\hat{Z}}$ is defined by at most $\ell$ equations of degree bounded by $\alpha_2 d$.

In order to produce our polynomial $Q$, we move first to the complex numbers and consider the algebraic set ${\hat{Z}}^\C$ defined by the same equations of ${\hat{Z}}$:
\begin{equation}\label{incidence_complex} {\hat{Z}}^\C=\{(g, P)\in \C^{(n+1)\times (n+1)}\times \mathcal{P}^\C_{n,\underline{d}} \mid \forall (i,j)\in I: h_{i,j}(g, P)=0 \}.\end{equation}
Here, $\mathcal{P}^\C_{n,\underline{d}}$ denotes the space of \emph{complex} polynomial systems with degree pattern $\underline{d}$.
Denoting by $\pi:\C^{(n+1)\times (n+1)}\times  \mathcal{P}^\C_{n,\underline{d}} \to  \mathcal{P}^\C_{n,\underline{d}} $ the projection on the second factor, note that
\beq \label{eq:int}Z\subseteq\pi({\hat{Z}}^\C)\cap  \mathcal{P}_{n,\underline{d}} .\eeq
Therefore, in order to get a polynomial vanishing on $Z$, we can sufficiently find a real polynomial~$Q$ vanishing on $\pi({\hat{Z}}^\C)$.

Write the algebraic set ${\hat{Z}}^\C$ as:
\beq\label{eq:irreducible} {\hat{Z}}^\C=\bigcup_{k\geq 0}{\hat{Z}}^\C_k\eeq
where each ${\hat{Z}}_k^\C$ is the union of all irreducible components of ${\hat{Z}}^\C$ of dimension $k$, namely
\beq {\hat{Z}}^\C_k=\bigcup_{i=1}^{\gamma_k} {\hat{Z}}^\C_{k, i}\quad \textrm{with ${\hat{Z}}^\C_{k, i}$ irreducible}.\eeq
Observe that $\C^{(n+1)\times (n+1)}\times  \mathcal{P}^\C_{n,\underline{d}} $ is irreducible and therefore, by \cite[Corollary 2, p.\ 75]{Shafarevich}, the dimension of each component of ${\hat{Z}}^\C$ is bounded below by $(n+1)^2+N-\ell$, where $N=\dim_\C \mathcal{P}^\C_{n,\underline{d}}$. Therefore the previous union \eqref{eq:irreducible} can be written as:
\beq {\hat{Z}}^\C=\bigcup_{k\geq (n+1)^2+N-\ell}{\hat{Z}}^\C_k.\eeq
Fix a number $(n+1)^2+N-\ell\leq k\leq (n+1)^2+N$ (the number $k$ is in range of the possible dimensions for the components of ${\hat{Z}}_{k}^\C$) and observe that
\beq \sum_{i=1}^{\gamma_k} \deg({\hat{Z}}^\C_{k, i})=\deg\left({\hat{Z}}^\mathbb{C}_k\right)\leq (\alpha_2 d)^\ell.\eeq
The reason for this is that, when defining the degree of ${\hat{Z}}^\C_{k}$ we need to intersect it with a generic linear space $L$ of dimension $t=(n+1)^2+N-k$ and this dimension is bounded above by $\ell$, because $k\geq (n+1)^2+N-\ell$. Therefore $L\cap {\hat{Z}}^\C_{k}$ consists of finitely many points in $L\simeq \C^t$, and these points are defined by at most $\ell$ equations. Each of those equations has degree at most $\alpha_2 d$, because the defining equations in  (\ref{incidence_complex}) have degree at most $\alpha_2 d$.
Consequently the number of such points is bounded by $(\alpha_2 d)^t\leq (\alpha_2 d)^\ell.$

We use now \cite[Lemma 2]{Heintz}: since each ${\hat{Z}}^\C_{k,i}$ is irreducible and the projection $\pi$ is linear we have
$\deg(\pi({\hat{Z}}^\C_{k,i}))\leq\deg( {\hat{Z}}^\C_{k,i}).$
In particular this implies:
\beq \deg(\pi({\hat{Z}}_k^\C))=\sum_{i=1}^{\gamma_k}\deg(\pi({\hat{Z}}^\C_{k,i}))\leq\sum_{i=1}^{\gamma_k}\deg({\hat{Z}}^\C_{k,i})\leq (\alpha_2 d)^\ell. \eeq
Let us denote by $\delta_{i,k}$ the degree of $\pi({\hat{Z}}^\C_{k,i})$; since ${\hat{Z}}^\C_{k,i}$ is irreducible, then $\pi({\hat{Z}}^\C_{k,i})$ is irreducible as well and we can apply \cite[Proposition 3]{Heintz} to find a polynomial $F_{k,i}$ of degree bounded by $\delta_{k,i}$ vanishing on $\pi({\hat{Z}}^\C_{k,i}).$ Set now
\beq F_k:=F_{k,1}\cdots F_{k, \gamma_k}.\eeq
The polynomial $F_k$ vanishes on $\pi({\hat{Z}}^\C_k)$ and has degree bounded by
\beq \deg(F_k)=\sum_{i=1}^{\gamma_k}\deg(F_{k,i})\leq \sum_{i=1}^{\gamma_k}\delta_{k,i}\leq (\alpha_2 d)^\ell.\eeq
Define the polynomial
\beq \label{eq:factors}F:=\prod_{k\geq (n+1)^2+N-\ell}F_k\eeq
and observe that, because there are at most $\ell$ factors in \eqref{eq:factors}, then the degree of $F$ is bounded by $\ell(\alpha_2 d)^\ell.$ The polynomial $F$ is not yet what we want, because it might not be real. To fix this, write $F=\mathfrak{Re}(F)+i\mathfrak{Im}(F)$ and define
\beq Q:=\left(\mathfrak{Re}(F)\right)^2+\left(\mathfrak{Im}(F)\right)^2.\eeq
Then $Q$ vanishes on $Z^\C$ (therefore on $Z$ and on $\Sigma\subset Z$) and its degree is bounded by
\beq \deg(Q)\leq 2\ell(\alpha_2 d)^\ell\leq ud^u\eeq
for some constant $u$ which depends on $W$ only.
This finishes the proof of the proposition.
\end{proof}
%%%%%%%%%%%%%%%%%%%%%%%%%%%%%%%%%%%%%%%%%%%%%%%%%%%%%%%%%%%%
%%%%%%%%%%%%%%%%%%%%%%%%%%%%%%%%%%%%%%%%%%%%%%%%%%%%%%%%%%%%
%%%%%%%%%%%%%%%%%%%%%%%%%%%%%%%%%%%%%%%%%%%%%%%%%%%%%%%%%%%%
%%%%%%%%%%%%%%%%%%%%%%%%%%%%%%%%%%%%%%%%%%%%%%%%%%%%%%%%%%%%
%%%%%%%%%%%%%%%%%%%%%%%%%%%%%%%%%%%%%%%%%%%%%%%%%%%%%%%%%%%%
%%%%%%%%%%%%%%%%%%%%%%%%%%%%%%%%%%%%%%%%%%%%%%%%%%%%%%%%%%%%
%%%%%%%%%%%%%%%%%%%%%%%%%%%%%%%%%%%%%%%%%%%%%%%%%%%%%%%%%%%%
%%%%%%%%%%%%%%%%%%%%%%%%%%%%%%%%%%%%%%%%%%%%%%%%%%%%%%%%%%%%
\section{Norms and polynomials}\label{sec:norms_and_polys}
In this section, we first introduce the decomposition of the space of homogeneous polynomials into the so-called harmonic basis. Then we define several norms on the space of polynomials, which will be used in the proofs later.
\subsection{Harmonic polynomials}\label{sec:harmonics}
The switch from the monomial basis to the harmonic basis will be the key to obtain a low degree approximation of singular loci.
\begin{definition}
Let $0\leq \ell \leq d$ the space of homogeneous \emph{harmonic} polynomials is
$$\mathcal{H}_{n,\ell}:= \Big\{ P\in \mathcal{P}_{n,\ell} \mid \sum_{i=0}^n\frac{\partial^2}{\partial x_i^2}P=0\Big\}.$$
\end{definition}

The space $\mathcal{P}_{n,d}$ can be decomposed as:
\begin{equation}\label{eq:decomp} \mathcal{P}_{n,d}=\bigoplus_{d-\ell\in 2\mathbb{N}}\|x\|^{d-\ell}\mathcal{H}_{n,\ell}.\end{equation}
The decomposition \eqref{eq:decomp} has two important properties (see \cite{Kostlan95}):
\begin{itemize}
\item [(i)]Given a scalar product which is invariant under the action of $O(n+1)$ on $\mathcal{P}_{n,d}$ by change of variables, the decomposition \eqref{eq:decomp} is orthogonal for this scalar product.
\item [(ii)] The action of $O(n+1)$ on $\mathcal{P}_{n,d}$ preserves each $\mathcal{H}_{n,\ell}$ and the induced representation on the space of harmonic polynomials is irreducible. In particular, there exists a unique, up to multiples, scalar product on $\mathcal{H}_{n,\ell}$ which is $O(n+1)$-invariant.
\end{itemize}

\subsection{Norms of polynomials}\label{sec:norms}
We continue this section by defining several norms on the space of polynomials $\mathcal{P}_{n,d}$ realized as a subspace of $C^r(S^n,\R)$. In general, we give $\mathcal{P}_{n,\underline{d}}$ the structure of a normed space by endowing each space $\mathcal{P}_{n, d_i}$ with a norm $\| \cdot \|_{d_i}$. The norm on $\mathcal{P}_{n,\underline{d}}$ is:
$$\| P \|:=\left(\sum_{i=1}^{m} \| P_i \|_{d_i}^{2}\right)^{1/2} \text{for}\quad P=(P_1,\ldots,P_m)\in \mathcal{P}_{n,\underline{d}}$$\\
We identify $\mathcal{P}_{n,d}$ with its image in $C^r(S^n,\R)$ given by
$\mathcal{S}_{n,d}:=\lbrace p:S^n\to \R \mid p=P|_{S^n}, P\in  \mathcal{P}_{n,d}\rbrace.$
The decomposition \eqref{eq:decomp} induces a decomposition:
\begin{equation} \label{eq:decomp2}\mathcal{S}_{n,d}=\bigoplus_{d-\ell\in 2\mathbb{N}}V_{n,\ell}\quad \text{with}\quad V_{n,\ell}=\mathcal{H}_{n,\ell}|_{S^n}.\end{equation}
Writing $P=\sum_{\ell}P_\ell$ with each $P_\ell\in \|x\|^{d-\ell}\mathcal{H}_{n,\ell}$ as in \eqref{eq:decomp}, when taking restrictions to the unit sphere we have $p=\sum_{\ell}p_\ell$ with each $p_\ell$ the restriction to $S^n$ of a polynomial of degree $\ell$: in other words, the restriction to the unit sphere ``does not see'' the $\|x\|^{d-\ell}$ factor, which is constant on the unit sphere.

Here follows the definition of some relevant norms that we will use in this paper. The first three of them are induced by an orthogonally invariant scalar product: by property (i) above the decomposition \eqref{eq:decomp2} is orthogonal for all of them.

\subsubsection{The Bombieri-Weyl norm} \label{sec:BW}  Let $P=\sum_{\alpha_0+\cdots+\alpha_n=d} a_{\alpha_0,\ldots,\alpha_0} \, \sqrt{\tfrac{d!}{\alpha_0!\cdots\alpha_n!}}\,x_{0}^{\alpha_0}\cdots x_n^{\alpha_n}$ be homogeneous polynomial of degree $d$. The Bombieri-Weyl norm of $P$ is defined by
$$ \| P\|_{\mathrm{BW}}^2:= \sum_{\alpha_0+\cdots+\alpha_n=d} \,(a_{\alpha_0,\ldots,\alpha_0} )^2.$$
Comparing with \eqref{Kostlan}, we see that Kostlan polynomials are given by a multivariate standard Gaussian distribution with respect to the Bombieri-Weyl product. This is the ``close connection'' we have mentioned earlier in the paper. The Bombieri-Weyl distance is $d_\mathrm{BW}(P,Q) = \Vert P-Q\Vert.$
\subsubsection{The $L^2$-norm} This norm is the $L^2$-norm of $p$ defined by
\begin{equation} \|P\|_{L^2}^2:=\int_{S^n} p(x)^2\,\mathrm{d}x,\end{equation}
where ``$\mathrm{d}x$'' denotes integration with respect to the standard volume form of the sphere.

\subsubsection{The Sobolev $q$-norm}
Let $P=\sum_{\ell}P_\ell$ be the decomposition into the harmonic polynomials basis; see \eqref{eq:decomp}. Then the Sobolev $q$-norm is defined by
\begin{equation} \|P\|_{H^q}^2:= \|P_0\|_{L^2}^2+\sum_{d-\ell\in2\mathbb{N}}\ell^{2q}\|P_\ell\|_{L^2}^2.\end{equation}
Note that  $\|P_0\|_{L^2}^2=0$ when $d$ is odd. Moreover $\|\cdot\|_{H^0}=\|\cdot\|_{L^2}$.\\

\subsubsection{$C^r$-norm}\label{section:norm}
The $C^r$ norm is defined for all $C^r$ functions. The $C^r$ norm for polynomials is then just the restriction to the space of polynomials. We give the general definition.

We fix an orthogonal invariant norm $\nu$ on the vector space $\bigoplus_{j=0}^r\R^{mN_j}$. Moreover, we let $\pi: J^r(\R^{n+1},\R^m)\to \bigoplus_{j=0}^r\R^{mN_j}$ be the projection that
removes the base point. Then, we define the norm of a jet to be $\nu(z):=\nu(\pi(z))$; i.e., the norm of a jet is the norm of the point in the fiber.
For a given $\eta\in J^{r}_x(S^n, \R^m) $ we define:
\beq\label{eq:hat} \hat{\nu}(\eta)=\inf_{z\in J^r_x(\R^{n+1},\R^m):\, \rho(z)=\eta} \nu (z),\eeq
where $\rho$ is the restriction map from \eqref{diagram_rho},
and for a function $f\in C^r(S^n,\R^m) $ we then set
\begin{equation}\label{def_C_r_norm}
 \|f\|_{\CC^r}^{\nu}:=\sup_{x\in S^n}\, \hat{\nu}(j^rf(x)).
 \end{equation}

\begin{remark}
The definition of the $C^r$ norm includes the choice of a norm $\nu$. Yet, the topology it induces is independent of this choice. This is because $\nu$ is a norm on a finite dimensional real vector space, and all norms on finite dimensional real vector spaces are equivalent; we call this topology the $C^r$ topology. Here is another way for obtaining it.
Consider the natural map
$j^r:C^r(S^n,\R^m)\to C^0(S^n,J^{r}(S^n,\mathbb{R}^m)).$
Because the sphere $S^n$ is compact, the strong and the weak topology on $C^0(S^n,J^{r}(S^n,\mathbb{R}^m))$ coincide and by \cite[Chapter 2, Theorem 4.3]{Hirsch} the image of this map is closed in the strong topology. In particular we can immediately induce a topology on the space $C^r(S^n,\R^m)$, which is the $C^r$ topology, \cite[pag. 62, before Theorem 4.4]{Hirsch}.
\end{remark}

\subsection{An inequality between the $C^r$ norm and the Sobolev norm}

In the last part of this section we want to prove an inequality between the $C^r$ norm and the Sobolev norm. This inequality will be useful for the proofs in the next section.
We note that by endowing $\mathcal{P}_{n,\underline{d}}$ with the product norm, in order to compare norms on $\mathcal{P}_{n,\underline{d}}$, we can sufficiently reduce to comparison of the corresponding norms for a single polynomial $P\in \mathcal{P}_{n,d}$ rather than a vector of polynomials.

We first recall the following result from \cite{Seeley}:
\begin{theorem}\label{seeley}
Let $\alpha=(\alpha_0,\ldots,\alpha_n)$ be a list of nonnegative integers and $ \partial^{\alpha}=\partial^{\alpha_{0}}_{x_0}\ldots \partial^{\alpha_{0}}_{x_0}$ be the associated differential operator.
There are constants $\beta_{(\alpha,n)} $ that only depend on $\alpha$ and $n$ such that
$$ \int_{S^n}|\partial^{\alpha}P(x)|^{2} \mathrm{d}x\leq \beta_{(\alpha,n)}\ell^{2|\alpha|}\int_{S^n}|P(x)|^{2} \mathrm{d}x \; \text{ for every } P\in \mathcal{H}_{n,\ell},$$
where $\vert \alpha\vert=\alpha_{0}+\ldots+\alpha_{n}$.
\end{theorem}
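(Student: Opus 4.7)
The plan is to proceed by induction on $|\alpha|$, exploiting two facts: (a) partial differentiation commutes with the Laplacian, so each $\partial_i$ maps $\mathcal{H}_{n,\ell}$ into $\mathcal{H}_{n,\ell-1}$; and (b) restrictions to $S^n$ of polynomials in $\mathcal{H}_{n,\ell}$ are eigenfunctions of the spherical Laplacian $\Delta_{S^n}$ with eigenvalue $-\ell(\ell+n-1)$. Together these will let me bound one partial derivative at a time while staying within the harmonic category.

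For the base case $|\alpha|=1$, I fix $P\in \mathcal{H}_{n,\ell}$ and write $p=P|_{S^n}$. At any $x\in S^n$ the outward unit normal is $x$ itself, so the ambient gradient splits orthogonally as $\nabla P(x) = \nabla_{S^n} p(x) + \langle \nabla P(x), x\rangle\, x$. Euler's identity for the degree-$\ell$ homogeneous polynomial $P$ gives $\langle \nabla P(x), x\rangle = \ell\, P(x)$, and therefore on $S^n$ one obtains
$$\sum_{i=0}^{n}\bigl(\partial_i P(x)\bigr)^{2} = |\nabla_{S^n} p(x)|^2 + \ell^{2}\, p(x)^{2}.$$
Integrating over $S^n$ and combining with (b) together with integration by parts yields
$$\sum_{i=0}^{n}\int_{S^n}(\partial_i P)^2\,\mathrm{d}x = \ell(\ell+n-1)\int_{S^n} p^2\,\mathrm{d}x + \ell^{2}\int_{S^n}p^2\,\mathrm{d}x,$$
from which a constant $c_n$ depending only on $n$ can be extracted so that $\int_{S^n}(\partial_i P)^2\,\mathrm{d}x \leq c_n\,\ell^{2}\int_{S^n}p^2\,\mathrm{d}x$ for every individual index $i$.

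For the inductive step I would write $\partial^{\alpha} = \partial_i\, \partial^{\alpha'}$ with $|\alpha'|=|\alpha|-1$. By (a) the polynomial $\partial^{\alpha'}P$ lies in $\mathcal{H}_{n,\ell-|\alpha|+1}$, so the base case applies to it and gives $\int_{S^n}(\partial^{\alpha}P)^2\,\mathrm{d}x \leq c_n(\ell-|\alpha|+1)^2 \int_{S^n}(\partial^{\alpha'}P)^2\,\mathrm{d}x$. Iterating $|\alpha|$ times compounds the constants into $\prod_{j=0}^{|\alpha|-1} c_n(\ell-j)^{2} \leq c_n^{|\alpha|}\,\ell^{2|\alpha|}$, and setting $\beta_{(\alpha,n)} := c_n^{|\alpha|}$ (which depends only on $\alpha$ and $n$) completes the proof.

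The only mild subtlety lies in the base case: the orthogonal splitting $\nabla P = \nabla_{S^n} p + (\text{radial part})$ is what converts ambient partial derivatives into a spherical-geometric quantity, and it works precisely because on $S^n$ the outward normal coincides with the position vector, so that Euler's formula identifies the radial component as $\ell\, p$. Past this point, fact (a) guarantees that at every stage of the induction the lower-degree polynomial is still harmonic, keeping the eigenvalue identity available, so no additional tool is needed.
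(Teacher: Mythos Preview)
The paper does not prove this theorem at all: it is quoted verbatim as a result from \cite{Seeley} and used as a black box in the proof of Proposition~\ref{prop1}. So there is no ``paper's own proof'' to compare against.

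Your argument is correct and self-contained. The base case is exactly right: on $S^n$ the normal is the position vector, Euler's identity identifies the radial part of $\nabla P$ as $\ell p$, and integration by parts against the spherical Laplacian produces the eigenvalue $\ell(\ell+n-1)$, giving $\sum_i \int_{S^n}(\partial_i P)^2 = \ell(2\ell+n-1)\int_{S^n}p^2$. Since each summand is nonnegative, every single $\int_{S^n}(\partial_i P)^2$ is bounded by the whole sum, so one may take, e.g., $c_n = n+1$ for $\ell\ge 1$. The inductive step is clean because $\partial_i$ commutes with $\Delta$ and lowers the degree by one, so $\partial^{\alpha'}P$ is genuinely in $\mathcal{H}_{n,\ell-|\alpha|+1}$ and the base case applies again. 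The edge cases $\ell<|\alpha|$ (where $\partial^{\alpha}P\equiv 0$) and $|\alpha|=0$ are trivial.

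In short: you have supplied a valid proof of a result the paper merely cites, via an elementary induction that stays inside the harmonic category. This is more direct than invoking the general machinery in Seeley's paper, which treats arbitrary (not just polynomial) eigenfunctions of elliptic operators; for the polynomial case needed here your hands-on computation is entirely adequate.
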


The following Proposition connects the $C^r$-norm with the Sobolev $q$-norm.
\begin{proposition}\label{prop1}
Let $\nu$ be the norm defining the $C^r$ norm in equation \eqref{def_C_r_norm}.
There exists a constant $c=c(r,n,\nu)>0$ depending on $r,n$ and $\nu$ such that, if $q\geq r+\frac{n-1}{2}$, we have $\|P\|_{\CC^r}^\nu\leq c\, \sqrt{d} \,\|p\|_{H^{q}}.$
\end{proposition}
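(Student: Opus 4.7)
The plan is to combine three ingredients: the flexibility in choosing both the norm $\nu$ (all norms on the finite-dimensional fibre are equivalent) and the polynomial lift of $p$ from $S^n$ to $\R^{n+1}$; the harmonic decomposition of $P$; and a classical $L^\infty$ bound on spherical harmonics that feeds on Theorem \ref{seeley}.

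Since $\|P\|^{2}=\sum_{i}\|P_{i}\|_{d_{i}}^{2}$, the product structure of the norm on $\mathcal{P}_{n,\underline{d}}$ lets me reduce to a single polynomial ($m=1$). Because all norms on $\bigoplus_{j=0}^{r}\R^{N_{j}}$ are equivalent, at the cost of a constant depending on $(r,n,\nu)$ I may replace $\nu$ by the convenient choice $\nu(j^{r}F(x))=\sum_{|\alpha|\le r}|\partial^{\alpha}F(x)|$. By the very definition of $\hat\nu$ one has $\hat\nu(j^{r}p(x))\le \nu(j^{r}\tilde P(x))$ for \emph{any} polynomial lift $\tilde P$ of $p$ to $\R^{n+1}$, so I am free to avoid the original $P=\sum_{\ell}\|x\|^{d-\ell}H_{\ell}$---whose ambient derivatives pick up unwanted large factors coming from derivatives of $\|x\|^{d-\ell}$---and instead use the cleaner lift $\tilde P:=\sum_{\ell}H_{\ell}$, which agrees with $p$ on $S^{n}$ since $\|x\|=1$ there and behaves well under differentiation.

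The triangle inequality then reduces the problem to bounding $\sup_{x\in S^{n}}|\partial^{\alpha}H_{\ell}(x)|$ for each $|\alpha|\le r$ and each admissible $\ell$. Since the Laplacian commutes with partials, $\partial^{\alpha}H_{\ell}$ is again harmonic, of degree $\ell-|\alpha|$. The key pointwise bound I aim to establish is
$$\|\partial^{\alpha}H_{\ell}\|_{\infty}\;\le\;c_{n,\alpha}\,\ell^{|\alpha|+(n-1)/2}\,\|H_{\ell}\|_{L^{2}(S^{n})},$$
obtained by chaining two estimates: (i) Theorem \ref{seeley} pays a factor $\ell^{|\alpha|}$ to pass from $H_{\ell}$ to $\partial^{\alpha}H_{\ell}$ in $L^{2}$, and (ii) the classical reproducing-kernel inequality $\|\phi\|_{\infty}\le\sqrt{\dim \mathcal{H}_{n,k}/|S^{n}|}\,\|\phi\|_{L^{2}}$ for a spherical harmonic $\phi$ of degree $k$---a straightforward consequence of the addition formula $\sum_{j}|Y_{j}(x)|^{2}=\dim\mathcal{H}_{n,k}/|S^{n}|$ combined with Cauchy--Schwarz---contributes an extra factor $\ell^{(n-1)/2}$, using $\dim\mathcal{H}_{n,k}=O(k^{n-1})$.

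Finally, summing the displayed bound over the at most $d/2+1$ admissible values of $\ell$ (treating $\ell=0$, whose derivatives vanish, separately) and applying Cauchy--Schwarz yields
$$\|p\|_{C^{r}}^{\nu}\;\le\;c\sqrt{d}\,\|p\|_{H^{r+(n-1)/2}}\;\le\;c\sqrt{d}\,\|p\|_{H^{q}},$$
where the last inequality uses $q\ge r+(n-1)/2$ together with $\ell\ge 1$. The factor $\sqrt{d}$ arises purely from counting summands in Cauchy--Schwarz; no finer information on the distribution of the $p_{\ell}$ is needed. I expect the main obstacle to be the pointwise step (ii): Seeley provides only $L^{2}$ control of derivatives, and the jump from $L^{2}$ to $L^{\infty}$ on a single harmonic eigenspace costs precisely the factor $\ell^{(n-1)/2}$ that is responsible for the threshold $q\ge r+(n-1)/2$ in the statement. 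Everything else is bookkeeping.
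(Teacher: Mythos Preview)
Your proposal is correct and follows essentially the same route as the paper: harmonic decomposition, Seeley's $L^{2}$ bound on derivatives, the zonal-harmonic/reproducing-kernel estimate to pass from $L^{2}$ to $L^{\infty}$ (costing $\ell^{(n-1)/2}$), and a final Cauchy--Schwarz over the $\ell$-sum producing the $\sqrt{d}$. Your explicit remark that one may take the clean lift $\tilde P=\sum_{\ell}H_{\ell}$ (rather than $\sum_{\ell}\|x\|^{d-\ell}H_{\ell}$) via the infimum defining $\hat\nu$ is in fact a clarification the paper leaves implicit.
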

\begin{proof}
Let $ p=P\vert_{S^n} $ and $ p=\sum_{d-\ell\in 2\N}p_\ell $ be the decomposition of $p$ in the harmonic basis from~\eqref{eq:decomp2}. By definition  \eqref{def_C_r_norm} of the $C^r$-norm, we have
$
\|p_\ell \|_{C^r}\leq \sup_{x\in S^n} \nu (j^{r}P_{\ell}(x)).
$
Moreover, there exists a constant  $c_1=c_1(r,n,\nu)$ such that
$\nu (j^{r}P_{\ell}(x))\leq c_1 \, \sup_{x\in S^n} ( \sum_{\vert \alpha\vert \leq r}\vert \partial^{\alpha}P_\ell(x)\vert),$
where $ \partial^{\alpha}=\partial^{\alpha_{0}}_{x_0}\ldots \partial^{\alpha_{0}}_{x_0}$ and $\vert \alpha\vert=\alpha_{0}+\ldots+\alpha_{n}  $. This is because the right-hand side of the equation is a multiple of the $L^1$-norm on the fibers of the jet space, and all norms on a finite dimensional vector space are equivalent. Summarizing, we have
\beq\label{ineq1}
\|p_\ell \|_{C^r}^\nu\leq c_1 \, \sup_{x\in S^n} \Big( \sum_{\vert \alpha\vert \leq r}\vert \partial^{\alpha}P_\ell(x)\vert\Big).
\eeq

Recall from \eqref{eq:decomp2} the definition of $V_{n,\ell}=\mathcal{H}_{n,\ell}|_{S^n}$.
For every $\ell=0, \ldots, d$ the space $V_{n,\ell}$ with the $L^2$-scalar product is a reproducing kernel Hilbert space, that is there exists $Z_\ell:S^n\times S^n\to \R$ such that for every $h_\ell\in V_{n,\ell}$:
\begin{equation}\label{eq:reproducing} h_\ell(x)=\int_{S^n}h_\ell(\theta)Z_{\ell}(x, \theta)\mathrm{d}\theta.\end{equation}
The function $Z_\ell$ (the ``zonal harmonic'') is defined as follows: letting $\{y_{\ell, j}\}_{j\in J_{\ell}}$ be an $L^2$-orthonormal basis for $V_{n, \ell}$ we set
$Z_\ell(\theta_1, \theta_2)=\sum_{j\in J_{\ell}}y_{\ell, j}(\theta_1)y_{\ell, j}(\theta_2)$
(written in this way \eqref{eq:reproducing} is easily verified, see \cite[Chapter 5]{HFT} and \cite[Proposition 5.27]{HFT} for more properties of the Zonal harmonic). Then from this it follows that:
\begin{equation}\label{eq:zonal} \|Z_\ell(\theta_1, \cdot)\|^2_{L^2}=\langle Z_\ell(\theta_1, \cdot), Z_\ell(\theta_1, \cdot)\rangle_{L^2}=Z_\ell(\theta_1, \theta_1)=\frac{\dim (V_{n,\ell})}{\textrm{vol}(S^n)}=O(\ell^{n-1}),\end{equation}
where the last identity follows from\footnote{The constant ``$\frac{1}{\textrm{vol}(S^n)}$'' appears because in \cite{HFT} the normalized $L^2(S^n)$ space is used, i.e. the convention $\textrm{vol}(S^n)=1$ is adopted.} \cite[Proposition 5.27 (d)]{HFT} and \cite[Proposition 5.8]{HFT}. Writing $p^\alpha_\ell := \partial^{\alpha}P_\ell |_{S^n}$ we obtain the following.
\begin{alignat}{2}\label{ineq2}
| \partial^{\alpha}P_\ell|_{S^n}(x)|&=\left\vert\int_{S^n}p^\alpha_\ell(\theta) \, Z_{\ell}(x, \theta)|\mathrm{d}\theta\right\vert, \quad &&\text{by \eqref{eq:reproducing}}\\
&\leq \|\partial^{\alpha}P_\ell \,\|_{L^2} \; \|Z_\ell(x, \cdot)\|_{L^2},&&\text{by the Cauchy-Schwartz inequality}\\
&\leq c_2(\alpha,n)\, \ell^{|\alpha|+\frac{n-1}{2}} \, \|P_\ell\|_{L^2},&&\text{by Theorem \ref{seeley} and \eqref{eq:zonal}},
\end{alignat}
where $c_2(\alpha,n)$ is a constant that depends on $\alpha$ and $n$.
From the above inequalities it follows that
\begin{alignat}{2}\label{ineq3}
\|P\|_{\CC^r}^\nu&\leq \sum_{d-\ell \in2\mathbb{N}} \|p_\ell\|_{\CC^r}, && \text{by the triangle inequality}\\
& \leq \sum_{d-\ell \in2\mathbb{N}}c_1\sup_{x\in S^n} \Big( \sum_{\vert \alpha\vert \leq r}\vert \partial^{\alpha}P_\ell(x)\vert\Big),\quad &&\text{by \eqref{ineq1}}\\
& \leq c_3\sum_{d-\ell \in2\mathbb{N}}\ell^{r+\frac{n-1}{2}}\|p_{\ell}\|_{L^{2}}, && \text{by \eqref{ineq2}},
\end{alignat}
where $c_3$ is a constant that depends on $r,n,\nu$ (the dependence on $\alpha$ has been moved into the dependence on $r$). Then, we use the Cauchy-Schwartz inequality for $v_1=(\ell^{r+\frac{n-1}{2}}\|p_{\ell}\|_{L^2})_{d-\ell \in2\mathbb{N}}$ and $v_2=(1, \ldots, 1)$
so that:
\begin{align}
\sum_{d-\ell \in 2\mathbb{N}}\ell^{r+\frac{n-1}{2}}\|p_\ell\|_{L^2}&=\langle v_1, v_2\rangle\\
&\leq \|v_1\|\|v_2\|\\
&=\Big(\sum_{d-\ell \in 2\mathbb{N}}\ell^{2r+n-1}\|p_\ell\|_{L^2}^2\Big)^{1/2} \Big(\sum_{d-\ell \in 2\mathbb{N}}1\Big)^{1/2} \\
&\leq \sqrt{d}\,\Big(\sum_{d-\ell \in 2\mathbb{N}}\ell^{2r+n-1}\|p_\ell\|_{L^2}^2\Big)^{1/2}.
\end{align}
Plugging this into \eqref{ineq3} we obtain
$\|P\|_{\CC^r}^\nu \leq c_3\,\sqrt{d}\,\|P\|_{H^{q}}$ for  $q\geq r+\tfrac{n-1}{2}.$
This finishes the proof.
\end{proof}

\section{Proof of the quantitative stability Theorem}\label{sec:proof_thm_2}
In order to prove Theorem \ref{thm:quantstab} we need to recall Thom's isotopy lemma. We give a variant which is uses our notation. Recall from \eqref{def_C_r_norm} the definition of the $C^{r}$ norm $\Vert \cdot\Vert_{C^r}^\nu$. We denote the associated distance function $\mathrm{dist}_{C^{r+1}}^\nu(\cdot, \cdot)$. Furthermore, recall from Definition \ref{def:transversality} that $j^{r}f\pitchfork W$ means that $j^rf$ is transversal to $W$.

\begin{lemma}[Isotopy Lemma]
\label{lm1}
Let $f,g\in C^{r+1}(S^n,\mathbb{R}^{m})$ such that
$$  j^{r}f\pitchfork W \quad\text{and}\quad \| f-g\|_{C^{r+1}}^\nu < \mathrm{dist}_{C^{r+1}}^\nu(f,\Sigma_W).$$
Then $g\in\mathrm{SN}(f,W)$.
\end{lemma}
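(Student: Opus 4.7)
The plan is to use the straight-line homotopy $f_t := (1-t)f + tg$ for $t\in [0,1]$. Since $C^{r+1}(S^n,\mathbb{R}^m)$ is a real vector space, each $f_t$ lies in $C^{r+1}(S^n,\mathbb{R}^m)$, the map $t\mapsto f_t$ is continuous in the $C^{r+1}$ topology, and $f_0=f$, $f_1=g$. So this is a candidate homotopy in the sense of Definition \ref{def:stable}; the only thing left to verify is that $j^r f_t$ is transversal to $W$ for every $t$, i.e.\ that $f_t\notin \Sigma_W$ for every $t\in [0,1]$.

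Next I would estimate
$$
\|f_t-f\|_{C^{r+1}}^{\nu} \;=\; t\,\|g-f\|_{C^{r+1}}^{\nu} \;\leq\; \|g-f\|_{C^{r+1}}^{\nu} \;<\; \mathrm{dist}_{C^{r+1}}^{\nu}(f,\Sigma_W),
$$
for every $t\in [0,1]$. In other words, the whole segment $\{f_t\}_{t\in [0,1]}$ lies in the open ball of radius $\mathrm{dist}_{C^{r+1}}^\nu(f,\Sigma_W)$ around $f$ in $(C^{r+1}(S^n,\mathbb{R}^m),\|\cdot\|_{C^{r+1}}^\nu)$. By the very definition of the distance to $\Sigma_W$ as an infimum, such a ball cannot meet $\Sigma_W$: if $f_t\in \Sigma_W$ for some $t$, then $\mathrm{dist}_{C^{r+1}}^\nu(f,\Sigma_W)\leq \|f-f_t\|_{C^{r+1}}^\nu<\mathrm{dist}_{C^{r+1}}^\nu(f,\Sigma_W)$, a contradiction.

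Hence $j^r f_t\pitchfork W$ for every $t\in [0,1]$, which means $f$ and $g$ are $W$-transversely homotopic via $\{f_t\}$. By Definition \ref{def:stable} this gives $g\in \mathrm{SN}(f,W)$, as desired. I do not expect any serious obstacle here: the statement is essentially a formal consequence of three facts, namely that $C^{r+1}$ is a vector space (so we can take affine combinations), that the $C^{r+1}$-norm is convex along segments, and that the complement of $\Sigma_W$ is by definition the locus where the $r$-jet is transversal to $W$. The only minor point worth stating explicitly is that the affine interpolation is automatically a $C^{r+1}$ homotopy — no smoothing or cutoff argument is needed.
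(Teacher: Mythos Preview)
Your proof is correct and follows exactly the paper's approach: both use the straight-line homotopy $f_t=(1-t)f+tg$ and observe that the hypothesis keeps every $f_t$ outside $\Sigma_W$, so transversality holds along the whole segment and the conclusion follows from Definition~\ref{def:stable}. You have simply spelled out the norm estimate and the contradiction argument that the paper leaves implicit in the phrase ``the condition stated guarantees that\ldots''.
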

\begin{proof}The condition stated guarantees that the homotopy $f_t=(1-t)f+tg$ has the property that for every $t\in [0,1]$ the jet $j^rf_t$ is transversal to $W$. Thus we have an induced homotopy of maps between smooth manifolds $j^rf_t:S^n\to J^r(S^n, \R^m)$ which is transversal to all the strata of $W\subseteq J^r(S^n, \R^m)$ for every $t\in [0,1]$. In particular, this holds for $g=f_1$ and the conclusion follows from the definition of Stable Neighborhood in Definition \ref{def:stable}.
\end{proof}

We also need the following helpful lemma.
\begin{lemma}\label{helpful}
Let $P = \sum_{\alpha_0+\cdots+\alpha_n=d}\, c_\alpha\,x^\alpha \in \mathcal{P}_{n,\underline{d}}$ and $p=P|_{S^n}$. Furthermore, let $T:\mathcal{P}_{n,\underline{d}} \to \mathcal{P}_{n,\underline{d}}$ and $Y:\mathcal{P}_{n,\underline{d}} \to \mathcal{P}_{n,\underline{d}}$ be the linear maps defined by
$$T(P) = \sum_{|\alpha_0|\leq {d-r-2}}c_\alpha\, x^{\alpha} \quad\text{and}\quad Y(P) = \sum_{|\alpha_0|\geq {d-r-1}}c_\alpha\,x^{\alpha},$$
and let $\tau(p)$ and $\gamma(p)$ be the restrictions of $T(P)$ and $Y(P)$ to $S^m$ and $e_0=(1,0, \ldots, 0)\in S^n$.
Then,
$j^{r+1}p(e_0) = j^{r+1}\gamma(p)(e_0).$
\end{lemma}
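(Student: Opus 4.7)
The decomposition $P = T(P) + Y(P)$ is linear, and so is the restriction to $S^n$ and the jet prolongation $j^{r+1}$ at the fixed point $e_0$. Therefore
$$ j^{r+1}p(e_0) \;=\; j^{r+1}\tau(p)(e_0) + j^{r+1}\gamma(p)(e_0),$$
and the lemma reduces to the claim $j^{r+1}\tau(p)(e_0) = 0$, i.e.\ that $\tau(p)$ and all its partial derivatives up to order $r+1$ vanish at $e_0$. The main work is to establish this vanishing.

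To do so I would work in a local chart on $S^n$ around $e_0$. Concretely, on the open hemisphere $\{x_0>0\}$ use the parametrization
$$\phi: (y_1,\ldots,y_n)\longmapsto \bigl(\sqrt{1-|y|^2},\,y_1,\ldots,y_n\bigr),\qquad |y|^2 = y_1^2+\cdots+y_n^2,$$
so that $e_0 = \phi(0)$. Since the jet at a point on $S^n$ is computed from the Taylor expansion of the pullback by $\phi$ at the origin, it suffices to show that $\tau(p)\circ\phi$ vanishes to order at least $r+2$ at $0$.

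Each monomial $x_0^{\alpha_0}x_1^{\alpha_1}\cdots x_n^{\alpha_n}$ occurring in $T(P)$ satisfies $\alpha_0\le d-r-2$, and hence $\alpha_1+\cdots+\alpha_n\ge r+2$. Pulling such a monomial back through $\phi$ gives
$$(1-|y|^2)^{\alpha_0/2}\, y_1^{\alpha_1}\cdots y_n^{\alpha_n}.$$
Since $\alpha_0\ge 0$, the factor $(1-|y|^2)^{\alpha_0/2}$ is smooth near $0$ with Taylor expansion $1+O(|y|^2)$. Consequently the whole expression has a Taylor series whose lowest-order term is $y_1^{\alpha_1}\cdots y_n^{\alpha_n}$, which has degree $\alpha_1+\cdots+\alpha_n\ge r+2$. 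Thus each such monomial vanishes to order at least $r+2$ at the origin in the $y$-coordinates, and therefore so does the finite linear combination $\tau(p)\circ\phi$. In particular all derivatives of $\tau(p)\circ\phi$ up to order $r+1$ vanish at $0$, which gives $j^{r+1}\tau(p)(e_0)=0$ and completes the proof.

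There is no substantive obstacle: the only thing to check carefully is that $\alpha_0\le d-r-2$ forces $\alpha_1+\cdots+\alpha_n\ge r+2$ (using homogeneity $\alpha_0+\cdots+\alpha_n=d$) and that smoothness of $(1-|y|^2)^{\alpha_0/2}$ near the origin does not lower the vanishing order, both of which are direct.
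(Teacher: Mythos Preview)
Your proof is correct and follows essentially the same idea as the paper: both reduce to showing that $\tau(p)$ has vanishing $(r+1)$-jet at $e_0$, using that every monomial in $T(P)$ satisfies $\alpha_1+\cdots+\alpha_n\ge r+2$. The only cosmetic difference is that the paper observes the ambient partial derivatives of $T(P)$ in $\mathbb{R}^{n+1}$ already vanish at $e_0$ and then invokes the restriction map $\rho$ of \eqref{rho_explicit}, whereas you pull back through an explicit chart on $S^n$; these are equivalent verifications of the same vanishing.
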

\begin{proof}
Note first that $P= Y(P)+T(P).$ All the derivatives of order up to $r+1$ of $T(P)$ vanish at $e_0$, and consequently the restrictions of the corresponding polynomials to $T_{e_0}S^m$ will also be zero.
\end{proof}

Now we are ready to prove Theorem \ref{thm:quantstab}.
\begin{proof}[Proof of Theorem \ref{thm:quantstab}]
For every $x\in S^n$ let\beq Z_{W}(x):=\{j^{r+1}f(x) \mid \textrm{$j^rf$ is not transversal to $W$ at $x$}\} \subset J^{r+1}_x(S^n, \R^m);
\eeq
that is, $Z_{W}(x) = j^{r+1}(\Sigma_W(x))$.
For $d_1,\ldots,d_m\geq {r+1}$, the map $S^n\times \mathcal{P}_{n, \underline{d}}\to J^{r+1}(S^n, \R^m)$ given by $(x, P)\mapsto j^{r+1}p(x)$, where $p=P|_{S^n}$, is a submersion (see \cite[Section 1.7]{eliashberg}) and therefore $Z_{W}(x)$ can be defined using only polynomial functions:
\beq\label{eq:submersion}
Z_{W}(x)=j^{r+1}(\Sigma_{W,\underline{d}}(x)).\eeq
Now, let $\mathrm{dist}^{\hat{\nu}}$ be the distance function induced by the norm $\hat{\nu}$ on $J_{x}^{r+1}(S^n, \R^m)$ as defined in~\eqref{eq:hat}.
The idea of the proof is to show the two inequalities
\beq\label{eq:12} \inf_{x\in S^n}\mathrm{dist}^{\hat{\nu}}(j^{r+1}p(x), Z_{W}( x))\leq \mathrm{dist}_{C^{r+1}}^\nu(p, \Sigma_W).\eeq
and
\beq\label{eq:11} \mathrm{dist}_\mathrm{BW}(P, \Sigma_{W, \underline{d}})\leq c_1'\inf_{x\in S^n}\mathrm{dist}^{\hat{\nu}}(j^{r+1}p(x), Z_{W} (x)),\eeq
for some $c_1'>0$.
The result will follow from these two inequalities. In fact, setting $c_1=(c_1')^{-1}$ and combining \eqref{eq:11} with \eqref{eq:12} we therefore will have:
\beq \|f-p\|_{C^{r+1}}^\nu< c_1\mathrm{dist}_\mathrm{BW}(P, \Sigma_{W, \underline{d}})\implies  \|f-p\|_{C^{r+1}}^\nu<\mathrm{dist}^\nu_{C^{r+1}}(p, \Sigma_W)\implies f\in \mathrm{SN}(p, W),\eeq
where for the second implication we have used the definition of stable neighborhood (Definition \ref{def:stable}) and Lemma \ref{lm1} above.

Let us prove the two inequalities, starting from \eqref{eq:11}. For every $x\in S^n$ let $g_x\in O(n+1)$ be an orthogonal transformation mapping $x$ to $e_0=(1, 0,\ldots, 0)$ and define $P^x$ to be the polynomial:
\beq P^x:=(P_1\circ g_x, \ldots, P_m\circ g_x), \text{ where } P=(P_1,\ldots,P_m).\eeq
Then, using $\Sigma_{W, \underline{d}}=\bigcup_{x\in S^n}\Sigma_{W, \underline{d}}(x),$ we can write:
\beq \mathrm{dist}_\mathrm{BW}(P, \Sigma_{W, \underline{d}})=\inf_{x\in S^n}\mathrm{dist}_\mathrm{BW}(P, \Sigma_{W, \underline{d}}(x))=\inf_{x\in S^n}\mathrm{dist}_\mathrm{BW}(P^x, \Sigma_{W, \underline{d}}(e_0));\eeq
the last inequality due to the orthogonal invariance of the Bombieri-Weyl norm.
Writing out the definition of the distance we have
\begin{equation}
\mathrm{dist}_\mathrm{BW}(P, \Sigma_{W, \underline{d}})=\inf_{x\in S^n} \inf_{Q\in\Sigma_{W,\underline{d}}(e_0)}\|P^x -Q\|_{\mathrm{BW}}.
\end{equation}
Let $T,Y,\tau,\gamma$ be as in Lemma \ref{helpful}. This lemma implies that, if $Q\in \Sigma_{W, \underline{d}}(e_0)$ is in the discriminant, any other polynomial of the form $\tilde{Q}=Q+T(F)$, for $F\in \mathcal{P}_{n, \underline{d}}$, is also in the discriminant $\Sigma_{W, \underline{d}}(e_0),$ because all the derivatives of $T(F)$ up to order $r+1$ vanish at $e_0$.  In particular, if we want to minimize the quantity $\|P^x -Q\|_{\mathrm{BW}}$ for $Q\in\Sigma_{W,\underline{d}}(e_0)$, we can restrict ourself to the polynomials $Q$ such that: $T(Q)=T(P^x)$ and $Y(Q)\in \Sigma_{W, \underline{d}}(e_0)$ (notice that $Y(P)$ and $T(Q)$ are orthogonal for any pairs of polynmials $P$ and $Q$).
Therefore we get
\begin{equation}\label{111}
\mathrm{dist}_\mathrm{BW}(P, \Sigma_{W, \underline{d}})
=\inf_{x\in S^n}\inf_{Q\in\Sigma_{W,\underline{d}}(e_0)}\left(\sum_{i=1}^m\bigg\|Y(P_i^x)-Y(Q_i)\bigg\|_\mathrm{BW}^2\right)^{1/2}
\end{equation}
For $i=1, \ldots, m$,  let
$$P_i^x(y)=\sum_{\alpha_1+\cdots+\alpha_n=d}\, c_\alpha(P_i^x)\,y^\alpha \quad\text{and}\quad Q(y)=\sum_{\alpha_1+\cdots+\alpha_n=d}\, c_\alpha(Q)\,y^\alpha.$$
be the expansions of $P_i^x=P_i\circ g_x$ and $Q_i$ (the $i$-the entry of $Q$) in the monomial basis.  Then, following \eqref{111} we have
\begin{align*}
\mathrm{dist}_\mathrm{BW}(P, \Sigma_{W, \underline{d}})&=\inf_{x\in S^n}\inf_{Q\in\Sigma_{W,\underline{d}}(e_0)}\left(\sum_{i=1}^m\sum_{|\alpha_0|\geq d-r-1}|c_\alpha(P_i^x)-c_\alpha(Q_i)|^{2}\,\dfrac{\alpha_{0}!\cdots \alpha_{n}!}{d!}\right)^{1/2}\\
& \leq \inf_{x\in S^n}\inf_{Q\in\Sigma_{W, \underline{d}}(e_0)}\left(\sum_{i=1}^m\sum_{|\alpha_1|+\ldots+|\alpha_n|\leq r+1}|c_\alpha(P_i^x)-c_\alpha(Q_i)|^{2} \right)^{1/2}.
\end{align*}
Let $q=Q|_{S^n}$  and $p^x = P^x|_{S^n}$. Observe now that $\left(\sum_{i=1}^m\sum_{|\alpha_1|+\ldots+|\alpha_n|\leq r+1}|c_\alpha(P_i^x)-c_\alpha(Q_i)|^{2}\right)^{1/2}$ is the Frobenius norm of $\pi(j^{r+1}(p^x-q)(e_0))$, where, as before, $\pi$ is the projection on $J_{e_0}^{r+1}(S^n, \R^m)$ that removes the base point. Since all norms on finite dimensional spaces are equivalent, there exists $c_1'$ such that
\begin{align*} \left(\sum_{i=1}^m\sum_{|\alpha_1|+\ldots+|\alpha_n|\leq r+1}|c_\alpha(P_i^x)-c_\alpha(Q_i)|^{2} \right)^{1/2}\leq c_1'\hat{\nu}(j^{r+1}(p^x-q)(e_0)).\end{align*}
In particular, we have
\begin{alignat*}{2}
\mathrm{dist}_\mathrm{BW}(P, \Sigma_{W, \underline{d}})& \leq c_1' \inf_{x\in S^n}\inf_{Q\in\Sigma_{W,\underline{d}}(e_0)}\hat{\nu}_{e_0}(j^{r+1}(p^x-q)(e_0))&\\
& =c_{1}' \inf_{x\in S^n} \mathrm{dist}^{\hat{\nu}}(j^{r+1}p^x(e_0), Z_{W}(e_0))& \text{ by the definition of $Z_W(e_0)$ in \eqref{eq:submersion},}\\
&=c_{1}' \inf_{x\in S^n} \mathrm{dist}^{\hat{\nu}}(j^{r+1}p(x), Z_{W}(x))&\text{ by orthogonal invariance};
\end{alignat*}
This proves \eqref{eq:11}.

For the proof of \eqref{eq:12} we argue as follows. Recalling the definition of $\Sigma_W(x)$ given in \eqref{eq:decomposition} we have
\begin{align}\inf_{x\in S^n}\mathrm{dist}^{\hat{\nu}}(j^{r+1}p(x), Z_{W}(x))&=\inf_{x\in S^n}\inf_{f\in \Sigma_{W}(x)}\hat{\nu}(j^{r+1}(p-f)(x))\\
&\leq \inf_{x\in S^n}\inf_{f\in \Sigma_{W}(x)}\sup_{y\in S^n}\hat{\nu}(j^{r+1}(p-f)(y)),
\end{align}
By definition of the $C^r$ norm \eqref{def_C_r_norm} we have $\sup_{y\in S^n}\hat{\nu}(j^{r+1}(p-f)(y))=\|f-p\|_{C^{r+1}}^\nu$, and therefore
\begin{align}\mathrm{dist}_\mathrm{BW}(P, \Sigma_{W, \underline{d}})& \leq \inf_{x\in S^n}\inf_{f\in \Sigma_{W}(x)}\|f-p\|^{\nu}_{C^{r+1}}\\
&=\inf_{x\in S^n}\mathrm{dist}_{C^{r+1}}^\nu(p, \Sigma_W(x))\\
&=\mathrm{dist}_{C^{r+1}}^\nu(p, \Sigma_W),\end{align}
where in the last line we have used the fact that $\Sigma_W=\bigcup_{x\in S^n}\Sigma_W(x)$. This gives \eqref{eq:12}.
%As before let $p=P|_{S^n}$.
%The idea in the proof is to show that the inequality $\|f-p\|_{C^{r+1}}<c\ \textrm{dist}_\mathrm{BW}(P, \Sigma_{W,\underline{d}})$ implies
%$
%\|f-p\|_{C^{r+1}}\leq \mathrm{dist}_{C^{r+1}}(p, \Sigma_W).
%$
%Then, as a consequence of Lemma \ref{lm1} we have $f\in \mathrm{SN}(p,W)$.
%
%
%Pick now  $f\in \Sigma_W$. By (\ref{def_C_r_norm}) we have
%\beq \mathrm{dist}_{C^{r+1}}(p,f) = \sup_{x\in S^n}\hat{\nu}(j^{r+1}f(x)-j^{r+1}p(x))= \sup_{x\in S^n}\, \inf_{z\in J^{r+1}(\R^{n+1},\R^m) \atop \rho(z)=j^{r+1}(f-p)(x)} \nu (z)
%\eeq
%Since $S^n$ is compact, the supremum is a maximum. Moreover, by orthogonal invariance of $\nu$ we can assume that the maximum is attained at $e_0= (1,0,\ldots,0)\in S^n$.

\end{proof}

\section{Low degree approximation}\label{sec:low_degree_approx}
The goal of this section is proving Theorem \ref{thm:main}. For this we need to estimate the probability for the following projection of a polynomial stay inside the stable neighborhood.

Let us recall from the introduction our definition of the projection operator on polynomial maps of smaller degree. For an integer $L\in \{0, \ldots, d\}$, we set
\begin{equation}
p|_{L}:=\sum_{\ell\leq L,\, d-\ell \in 2\mathbb{N}}p_\ell.
\end{equation}
where $ p=\sum_{d-\ell\in 2\mathbb{N}}p_\ell$ is the harmonic decomposition of (each component of) $p=P|_{S^n}$. We extend this definition to polynomial maps $p=(p_1, \ldots, p_m)$ as done in \eqref{eq:projmap}. Next, we define the event of this projection to stay inside the stable neighborhood.

\begin{definition}[Stability event]\label{distanceevent}Let $W\subseteq J^{r}(S^n, \R^m)$ be a singularity type and  $c_1>0$ be the constant given by the Quantitative Stability Theorem \ref{thm:quantstab}.  For an integer $L\in \{0, \ldots, d\}$, we denote by $E_L$ the event
$
E_L=\{P\in\mathcal P_{n,\underline{d}} \mid \|p-p|_{L}\|_{C^{r+1}}<c_1\,\mathrm{dist}_{\mathrm{BW}}(P, \Sigma_W)\}
$, where, as before, $p=P|_{S^n}$.
\end{definition}
Notice that the Quantitative Stability Theorem \ref{thm:quantstab} implies that $E_L\subseteq A_L$ (the low-degree approximation event defined in Definition \ref{def:stabilityevent}) and in particular a lower bound on the probability of $E_L$  serves also as a lower bound for the probability of $A_L$.
\begin{lemma}
With the above notations, we have $E_L\subseteq A_L$.
\end{lemma}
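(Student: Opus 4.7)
The plan is to chain together two implications: first that $P\in E_L$ forces the projection $p|_L$ to lie in the stable neighborhood $\mathrm{SN}(p,W)$, and second that membership in the stable neighborhood produces an ambient isotopy between the two type-$W$ singular loci.

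For the first step, let $P\in E_L$, so $\|p-p|_L\|_{C^{r+1}} < c_1\,\mathrm{dist}_{\mathrm{BW}}(P,\Sigma_W)$. Since the inequality is strict, we must have $\mathrm{dist}_{\mathrm{BW}}(P,\Sigma_{W,\underline d})>0$, i.e.\ $P\notin \Sigma_{W,\underline d}$, which is precisely the hypothesis needed to apply Theorem~\ref{thm:quantstab}. Taking $f=p|_L$ in that theorem (which is a fortiori an element of $C^{r+1}(S^n,\mathbb R^m)$) immediately gives $p|_L\in \mathrm{SN}(p,W)$. By the definition of stable neighborhood there exists a homotopy $f_t\in C^{r+1}(S^n,\mathbb R^m)$ with $f_0=p$, $f_1=p|_L$, and $j^r f_t\pitchfork W$ for every $t\in[0,1]$.

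The second step is where the real work sits: we must upgrade this transverse homotopy to an ambient isotopy of pairs $(S^n,j^rp^{-1}(W))\sim (S^n,j^r p|_L^{-1}(W))$. This is a standard application of Thom's first isotopy lemma. Concretely, consider the map $F:S^n\times[0,1]\to J^r(S^n,\mathbb R^m)\times[0,1]$ defined by $F(x,t)=(j^r f_t(x),t)$. Because each $j^r f_t$ is transverse to every stratum of the Whitney-stratified semialgebraic set $W=\coprod W_i$, and transversality is preserved under the projection to the $t$-coordinate, the preimage $C:=F^{-1}(W\times [0,1])\subset S^n\times [0,1]$ is a Whitney-stratified subset such that the projection $\pi:C\to [0,1]$ is a proper stratified submersion (properness comes for free from compactness of $S^n$). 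Thom's isotopy lemma then yields a continuous family of diffeomorphisms $\varphi_t:S^n\to S^n$ with $\varphi_0=\mathrm{id}_{S^n}$ and $\varphi_t(j^r p^{-1}(W))=j^r f_t^{-1}(W)$; specializing to $t=1$ gives exactly the ambient isotopy required by Definition~\ref{def:ad}, so $P\in A_L$.

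The main obstacle is the legitimate invocation of Thom's isotopy lemma. This requires a Whitney stratification of $W$ compatible with transversality of every $j^r f_t$, and one must verify that compactness of $S^n\times [0,1]$ gives properness so that the stratified vector field lifting $\partial_t$ can be integrated over the whole interval. These are classical ingredients (see for instance the discussion of $W$-transverse homotopies in the references to \cite{Hirsch}), and since the semialgebraic hypothesis on $W$ guarantees the existence of a Whitney stratification and Definition~\ref{def:transversality} already stipulates transversality to each stratum, no additional input beyond what has been set up is needed. The remaining steps are purely formal bookkeeping.
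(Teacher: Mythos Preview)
Your proof is correct and follows essentially the same route as the paper's: apply Theorem~\ref{thm:quantstab} to place $p|_L$ in $\mathrm{SN}(p,W)$, then invoke Thom's isotopy lemma to convert the resulting $W$-transverse homotopy into an ambient isotopy. The only difference is cosmetic: the paper dispatches the second step by a direct citation to \cite[Th\'eor\`eme~2.D.2]{thom}, whereas you spell out the setup of the proper stratified submersion explicitly.
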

\begin{proof}
Let $P\in E_L$ and $p=P|_{S^n}$. Theorem \ref{thm:quantstab} implies $p|_L\in \mathrm{SN}(p, W)$. Then $j^rp$ and $j^rp|_L$ are connected by a homotopy transversal to $W$. The conclusion follows from \cite[Th\'eor\`eme~2.D.2]{thom}.
\end{proof}
Note that type-$W$ singular locus of polynomials in the event $A_L$ is ambient isotopic to the type-$W$ singular locus of a polynomial of degree $L$. In other words, polynomials in $E_L$ and in $A_L$ ``look like'' polynomials of lower degree.

For obtaining a bound on the probability of $E_L$ we first need to prove the next proposition.
\begin{proposition}\label{prop:discnhood}Let $W\subseteq J^{r}(S^n, \R^m)$. There exist constants  $a_1,a_2, a_3>0$ which depend on $W$, such that the following holds. For every $s\geq a_1\,d^{a_2}$ we have for systems of Kostlan polynomials $P=(P_1, \ldots, P_m)\in \mathcal{P}_{n, \underline{d}}$, with $\underline{d}=(d_1, \ldots, d_n)$,
\begin{equation} \mathbb{P}\big\{\|P\|_{\mathrm{BW}}\leq s \,\mathrm{dist}_{\mathrm{BW}}(P, \Sigma_W)\big\}\geq 1-\frac{a_3\,d^{a_2}}{s},\end{equation}
where $d=\max d_i$.
\end{proposition}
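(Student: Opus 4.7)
The idea is to replace the discriminant $\Sigma_W$ with the algebraic hypersurface $Z(Q)$ supplied by Proposition \ref{lemma:discdegree} and then apply a Gaussian tube estimate, exploiting the fact that the Kostlan distribution is a standard Gaussian with respect to the Bombieri--Weyl inner product.

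First I would apply Proposition \ref{lemma:discdegree} to obtain a nonzero polynomial $Q\colon\mathcal{P}_{n,\underline{d}}\to\mathbb{R}$ with $\deg Q\le ud^u$ and $\Sigma_{W,\underline{d}}\subseteq Z(Q)$. Since distance to a larger set is smaller, $\mathrm{dist}_{\mathrm{BW}}(P,\Sigma_{W,\underline{d}})\ge\mathrm{dist}_{\mathrm{BW}}(P,Z(Q))$, so the complement of the desired event is contained in $\{\|P\|_{\mathrm{BW}}>s\,\mathrm{dist}_{\mathrm{BW}}(P,Z(Q))\}$. I would then split this event by a threshold $T$ on the norm: by a union bound its probability is at most
\begin{equation}
\mathbb{P}\bigl(\|P\|_{\mathrm{BW}}>T\bigr)+\mathbb{P}\bigl(\mathrm{dist}_{\mathrm{BW}}(P,Z(Q))<T/s\bigr).
\end{equation}
Because $\|P\|_{\mathrm{BW}}^2$ is $\chi^2$-distributed with $N=\dim\mathcal{P}_{n,\underline{d}}=O(d^n)$ degrees of freedom, the choice $T=C\sqrt{N}$ makes the first term exponentially small in $d$; in fact Markov's inequality alone would suffice for the polynomial bound we are aiming at.

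The second term is controlled by a quantitative tube estimate of the form
\begin{equation}
\mathbb{P}\bigl(\mathrm{dist}(g,Z(Q))<t\bigr)\le c(N)\cdot D\cdot t,
\end{equation}
valid for $g\sim\mathcal{N}(0,I_N)$ and a real polynomial $Q$ of degree $D$, with $c(N)$ depending at worst polynomially on $N$. In the conic case this is exactly the classical spherical kinematic formula for algebraic hypersurfaces (i.e. a degree-$D$ hypersurface occupies proportion at most $O(D\sin\alpha)$ of a spherical tube of angular radius $\alpha$), applied to $\omega=P/\|P\|$, which is uniform on the unit sphere and independent of $\|P\|$. Applied with $t=C\sqrt{N}/s$ and $D\le ud^u$, this yields a bound of order $d^{u+n/2}/s$, which has exactly the required form $a_3d^{a_2}/s$; the constraint $s\ge a_1d^{a_2}$ is needed so that $T/s$ is small enough to lie in the range of validity of the tube formula.

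\textbf{Main obstacle.} The delicate point is the tube estimate. The spherical kinematic bound is clean only when $Z(Q)$ is a cone, whereas here $W$ is not required to be invariant under scalar multiplication of jets, so neither $\Sigma_{W,\underline{d}}$ nor $Z(Q)$ need be conic. I would bypass this either by (i)~homogenizing $Q$, replacing it by $\widetilde Q(x,t)=t^{\deg Q}Q(x/t)$ on $\mathcal{P}_{n,\underline{d}}\oplus\mathbb{R}$, whose zero set is the cone over $Z(Q)$ and whose degree is still $O(d^u)$, and then comparing the Gaussian tube in $\mathcal{P}_{n,\underline{d}}$ with the spherical tube in this enlarged space; or (ii)~invoking a direct Gaussian anti-concentration inequality for algebraic hypersurfaces of the B\"urgisser--Cucker type, which is technically heavier but has no conicity requirement. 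Either route produces the constants $a_1,a_2,a_3$, depending on $W$ through the exponent $u$ of Proposition \ref{lemma:discdegree} and on $n,m$ through the dimension~$N$.
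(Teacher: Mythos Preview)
Your proposal follows the paper's strategy: pass from $\Sigma_{W,\underline d}$ to the hypersurface $Z(Q)$ via Proposition~\ref{lemma:discdegree}, then invoke a tube estimate for degree-$D$ algebraic hypersurfaces with $D\le ud^u$ and $N=\dim\mathcal{P}_{n,\underline d}=O(d^{nm})$. The paper's execution is a little more streamlined than your threshold split: it applies \cite[Theorem~21.1]{BuCu} directly on the unit sphere $S^{N-1}$ to obtain
\[
\frac{\mathrm{vol}\bigl(\{p\in S^{N-1}: d_{\sin}(p,\overline{\Sigma})^{-1}\ge s\}\bigr)}{\mathrm{vol}(S^{N-1})}\le C_3\,D\,N\,s^{-1}\qquad(s\ge 2DN),
\]
and then cones back to the Kostlan measure, using that $P/\|P\|_{\mathrm{BW}}$ is uniform on $S^{N-1}$ and that the event $\{\|P\|_{\mathrm{BW}}>s\,\mathrm{dist}_{\mathrm{BW}}(P,\Sigma)\}$ is homogeneous of degree zero. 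This sidesteps the norm-tail term $\mathbb{P}(\|P\|>T)$ altogether and makes the dependence on $N$ explicit (linear), whereas in your route one has to check that the $c(N)$ in the Gaussian tube bound is at worst polynomial in $N$.

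Your conicity worry is well placed and is in fact the only substantive issue: the paper's ``take the cone'' step tacitly assumes $Z(Q)$ is a cone, which in the stated generality of Definition~\ref{def:st} is not guaranteed. Your homogenization fix (i) is exactly the right patch and does not disturb the degree bound $\deg Q\le ud^u$, so the argument goes through.
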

\begin{proof}The proof of this Proposition is the same as the proof of \cite[Proposition 4]{Antonio}, of which the current statement is just a simple generalization.
Let $Q$ be the polynomial given by  Proposition~\ref{lemma:discdegree}. Then $\Sigma_W$ is contained in $Z(Q)$, the zero set of $Q$, and we can apply \cite[Theorem 21.1]{BuCu} as follows.

Let $N=\dim(\mathcal{P}_{n, \underline{d}})=\sum_{k=1}^m{d_k+n\choose n}\leq  d^{n m}$ and $D=\deg(Q)$.
Denoting by $d_\textrm{sin}$ the sine distance in the sphere, \cite[Theorem 21.1]{BuCu} tells that there exists a constant $C_3>0$ such that for all $s\geq 2DN$ we have:
\beq \frac{\textrm{vol}\left(\left\{\textrm{$p\in S^{N-1}$ such that $\frac{1}{d_\textrm{sin}(p, \overline{\Sigma})}\geq s$}\right\}\right)}{\textrm{vol}(S^{N-1})}\leq C_3D N s^{-1}.\eeq
Taking the cone over the set $\{\textrm{$p\in S^{N-1}$ such that $\frac{1}{d_\textrm{sin}(p, \overline{\Sigma})}\geq s$}\}$, we can rewrite the previous inequality in terms of the Kostlan distribution, obtaining that for all $s\geq 2DN$:
\beq\label{eq:pefe} \mathbb{P}\bigg\{ \|p\|_{\textrm{BW}}\geq s\cdot\mathrm{dist}_{\mathrm{BW}}(P, \Sigma_W)\bigg\}\leq C_3D N s^{-1}.\eeq
By Proposition \ref{lemma:discdegree}, we have $D\leq u d^u$. This implies that for some constants $a_1, a_2, a_3>0$ we have:
\beq 2DN\leq a_1d^{a_2}\quad \textrm{and}\quad C_3D N\leq a_3d^{a_2}.  \eeq
In particular \eqref{eq:pefe} finally implies that for all $s\geq a_1d^{a_2}$:
\beq \mathbb{P}\bigg\{\|p\|_{\mathrm{BW}}\leq s \cdot \mathrm{dist}_{\mathrm{BW}}(P, \Sigma_W)\bigg\}\geq 1-a_3d^{a_2}s^{-1}.\eeq
This finishes the proof.
\end{proof}

The following theorem estimates the probability that the stability event holds.\begin{theorem}[Probability estimation]\label{thm:approx}There exist constants $c_2, c_3, c_4>0$ (depending on $W$) such that for every $L\in\{0, \ldots , d\}$ with $d-L\in2\mathbb{N}$ and for every $\sigma>0$ we have
\begin{equation}\label{eq:lowerEL} \mathbb{P}(E_L)\geq 1-c_2\,d^{ c_3}L^{ c_4}e^{-\frac{L^2}{3d}}.\end{equation}
	\end{theorem}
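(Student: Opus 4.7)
The strategy is to bound $\mathbb{P}(E_L^c) = \mathbb{P}(\|p-p|_L\|_{C^{r+1}} \geq c_1\,\mathrm{dist}_{\mathrm{BW}}(P, \Sigma_W))$ by decoupling the $C^{r+1}$-norm of the high-harmonic part from the BW-distance to the discriminant. For a parameter $s \geq a_1 d^{a_2}$ to be optimized at the end, I introduce the event $A_s = \{\|P\|_{\mathrm{BW}} \leq s\,\mathrm{dist}_{\mathrm{BW}}(P, \Sigma_W)\}$. Proposition \ref{prop:discnhood} gives $\mathbb{P}(A_s^c) \leq a_3 d^{a_2}/s$, so
$$\mathbb{P}(E_L^c) \leq \frac{a_3\,d^{a_2}}{s} + \mathbb{P}\!\left(\|p-p|_L\|_{C^{r+1}}\geq \tfrac{c_1}{s}\,\|P\|_{\mathrm{BW}}\right).$$

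Next, I would replace the $C^{r+1}$-norm by a Sobolev $H^q$-norm via Proposition \ref{prop1} with $q = r+1 + \lceil(n-1)/2\rceil$, at the cost of a factor $c\sqrt{d}$. On the right-hand side I would use the standard Gaussian concentration for the chi-squared random variable $\|P\|_{\mathrm{BW}}^2$ with $N = \dim\mathcal{P}_{n,\underline{d}}$ degrees of freedom, namely $\mathbb{P}(\|P\|_{\mathrm{BW}}^2 \leq N/2) \leq e^{-c_0 N}$. This bounds the above tail probability by
$$e^{-c_0 N} + \mathbb{P}\!\left(\|p - p|_L\|_{H^q}^2 \geq \tfrac{c_1^2\,N}{2c^2 s^2 d}\right),$$
and since $N$ is polynomial in $d$ the first term is negligible compared with the target bound.

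The central ingredient is an estimate of the expected high-harmonic Sobolev mass
$\mathbb{E}\|p-p|_L\|_{H^q}^2 = \sum_{\ell > L,\, d-\ell\in 2\mathbb{N}} \ell^{2q}\,\mathbb{E}\|p_\ell\|_{L^2}^2.$
The key structural property of the Kostlan distribution, invoked already in the introduction, is that the harmonic components $H_\ell$ in the decomposition \eqref{eq:decomp} are independent centered Gaussian vectors whose $L^2$-variances are given by explicit binomial-type weights $\propto \binom{d}{(d-\ell)/2}$. Stirling's asymptotics yield exponential decay of these weights for $\ell$ past the critical scale $\sqrt{d}$, so summing against the polynomial Sobolev weight $\ell^{2q}$ produces an estimate
$$\mathbb{E}\|p-p|_L\|_{H^q}^2 \leq C_1\,N\,L^{C_2}\,e^{-L^2/(\alpha\,d)}$$
for suitable constants $C_1, C_2, \alpha > 0$. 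Combining with Markov's inequality then controls the last probability by $C_3\,s^2\,d\,L^{C_2}\,e^{-L^2/(\alpha d)}$.

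Putting everything together,
$$\mathbb{P}(E_L^c) \leq \frac{a_3\,d^{a_2}}{s} + e^{-c_0 N} + C_3\,s^2\,d\,L^{C_2}\,e^{-L^2/(\alpha d)}.$$
The first and third terms can be balanced by choosing $s$ equal to a polynomial in $d,L$ times $e^{L^2/(3\alpha d)}$; at this optimum both contributions become of the form $c_2\,d^{c_3}\,L^{c_4}\,e^{-L^2/(3d)}$ (after absorbing the $e^{-c_0 N}$ summand, which is exponentially small in $d^{nm}$, into the polynomial prefactor). The main technical obstacle is the Kostlan-specific variance computation in the harmonic basis together with the Stirling asymptotics yielding the correct exponential rate, since the final exponent $1/3$ in the statement arises precisely from the interplay of this rate with the cubic optimization in~$s$.
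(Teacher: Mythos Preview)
Your plan is correct and matches the paper's argument closely: both combine Proposition~\ref{prop1} (passage from $C^{r+1}$ to $H^q$), the Kostlan harmonic--tail variance estimate with Stirling decay $e^{-L^2/d}$ (the paper imports this from \cite[Proposition~2]{Antonio} rather than sketching it), Proposition~\ref{prop:discnhood} for the distance to $\Sigma_W$, and a final one--parameter optimization whose cube root produces the exponent $L^2/(3d)$. The only cosmetic difference is that the paper keeps two free parameters $s,t$ that collapse to a single $\sigma$ before optimizing, whereas you replace the role of $t$ by the chi--squared concentration $\|P\|_{\mathrm{BW}}^2\geq N/2$ and then apply Markov to $\|p-p|_L\|_{H^q}^2$; this is an equivalent bookkeeping choice.
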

	\begin{proof}
By Proposition \ref{prop1} there exists a constant $c=c(\ell,n,\nu)$ depending on $\ell,n,\nu$ such that
\begin{equation}\|p-p|_{L}\|_{C^{\ell+1}}^{\nu}\leq c d^{1/2} \|p-p|_{L}\|_{H^q}=(*)
\end{equation}
Moreover, observe that the proof of \cite[Proposition 2]{Antonio} works also for a polynomial list and gives the existence of a constant $c_2=c_2(n)>0$ (which depends on $n$) such that for all $t,q\geq 0$ and for every $L\in \{0, \ldots, d\}$ we have that
\begin{equation}\label{eq:pro2}(*)\leq cd^{1/2}t \|p\|_{\mathrm{BW}}\quad \text{ holds with probability } \quad\mathbb{P}_1\geq 1-c_2\frac{d^{-\frac{3n}{2}+1}L^{2q+n-2}e^{-\frac{L^2}{d}}}{t^2}.
\end{equation}
At the same time, by Proposition \ref{prop:discnhood}, for every $s\geq a_1d^{a_2}$:
\begin{equation}
\|p\|_{\textrm{BW}}\leq s \cdot \mathrm{dist}_{\mathrm{BW}}(P, \Sigma_W)\quad \text{ holds with probability } \quad
\mathbb{P}_2\geq 1-a_3\frac{d^{a_2}}{s}.
\end{equation}
Now, choosing $s, t$ as in the proof of \cite[Theorem 5]{Antonio}, there exist constants $c_1,c'_2, c'_3, c'_4>0$ such that for every $\sigma\geq 0$ we have
$\|p-p|_{L}\|_{C^{r+1}}<c_1\,\mathrm{dist}(P, \Sigma_W)$
with probability
\begin{equation}\label{eq:ff}\mathbb{P}(E_L)\geq 1-(1-\mathbb{P}_1)-(1-\mathbb{P}_2)\geq 1-\left(c'_2\,d^{c'_3}L^{c'_4}e^{-\frac{L^2}{d}}\sigma^2+\frac{1}{\sigma}\right).
\end{equation}
Let us denote now $\alpha(d, L)=c'_2\,d^{c'_3}L^{c'_4}e^{-\frac{L^2}{d}},$ so that we can rewrite \eqref{eq:ff} as:
\begin{equation} \mathbb{P}(E_L)\geq 1-\left(\alpha(d, L)\sigma^2+\frac{1}{\sigma}\right),\end{equation}
for all $\sigma>0$. In particular
\begin{equation} \mathbb{P}(E_L)\geq 1-\inf_{\sigma>0}\left(\alpha(d, L)\sigma^2+\frac{1}{\sigma}\right).\end{equation}
The above infimum is actually a minimum and it is reached at
$$ \sigma_0=3\left(\frac{\alpha(d, L)}{4}\right)^{1/3}.$$
Plugging the value of $\sigma_0$ into \eqref{eq:ff} gives the claim.
\end{proof}
The above estimate is very general, and one has to consider that we would like to have $L$ as small as possible and at the same time $\sigma$ as large as possible. We approach this issue as follows.

First, we choose $L$ to be of the order $O(\sqrt{d \log d})$ and we prove that with fast growing probability $p|_L$ stably approximates $p$; in particular, Theorem \ref{thm:quantstab} implies that for a given $W$, the $W$-type singular locus of a random Kostlan polynomial of degree $d$, with high probability as $d$ grows rapidly, is ambient isotopic to the $W$-type singular locus of a polynomial of degree $O(\sqrt{d\log d})$.
In the second step, we deal with exponential rarefactions of maximal configurations. More precisely, when choosing $L$ to be a root or a fraction of $d$, we can tune $\sigma$ so that the probability of stably approximate goes to 1 exponentially fast.

This distinction is the motivation for having three different regimes in Theorem \ref{thm:main}. Now we give a proof for this Theorem.

\begin{proof}[Proof of Theorem \ref{thm:main}]
Since $E_L\subseteq A_L$, and by Theorem \ref{thm:approx} we have
\beq \mathbb{P}(A_L)\geq\mathbb{P}(E_L)\geq 1- c_2d^{c_3}L^{c_4}e^{-\frac{L^2}{3d}}.\eeq
In the rest of the proof we plug in different values for $L$ and evaluate the right hand side of this inequality.
\emph{Proof of Theorem  \ref{thm:main}.1.} Let $L=b \sqrt{d\log d}$.  We see that with this choice
\begin{align}
\label{proof_thm:main_eq1}\mathbb{P}(A_L)&\geq 1- c_2d^{c_3}(b \sqrt{d\log d})^{c_4}e^{-\frac{b^2 \log d}{3}}\\
  & = 1- c_2b^{c_4}d^{c_3 + c_4/2 - b^2/3}(\log d)^{c_4/2}\\
  &\geq  1- c_2b^{c_4}d^{c_3 + c_4 - b^2/3}.
\end{align}
Thus, if $b>b_0:=\sqrt{3(c_3+c_4)}$, then we have
$$\mathbb{P}(A_L)\geq 1-a_1d^{-a_2},$$
where $a_1 = c_2b^{c_4}>0$ and $a_2 = -(c_3 + c_4 - b^2/3)>0$. This proves the first part of the theorem.

\emph{Proof of Theorem  \ref{thm:main}.2.} Let $L=d^b$ for some $b\in (1/2, 1).$ With this choice we have
\begin{align}
\mathbb{P}(A_L)&\geq 1- c_2d^{c_3 + bc_4}e^{-\frac{d^{2b-1}}{3}}  = 1- c_2 \frac{d^{c_3 + bc_4}}{e^{\frac{d^{2b-1}}{6}}} e^{-\frac{d^{2b-1}}{6}}.
\end{align}
Since $2b-1>0$, we have that $\frac{d^{c_3 + bc_4}}{e^{\frac{d^{2b-1}}{6}}}$ is bounded and so
$$\mathbb{P}(A_L)\geq 1-a_1e^{-d^{a_2}}$$
for some $a_1, a_2>0$ with $a_2<1$, because $2b-1<1$. This proves the second part of the theorem.

\emph{Proof of Theorem  \ref{thm:main}.3.} Let $L=bd$ with $b\in (0, 1)$. With this choice we have:
\begin{align}
\mathbb{P}(A_L)&\geq  1- c_2b^{c_4}d^{c_3+c_4}e^{-\frac{b^2d}{3}} = c_2b^{c_4}\frac{d^{c_3+c_4}}{e^{\frac{b^2d}{6}}}e^{-\frac{b^2d}{6}}.
\end{align}
We have that $\frac{d^{c_3+c_4}}{e^{\frac{b^2d}{6}}}$ is bounded and so
$$\mathbb{P}(A_L)\geq 1-a_1e^{-a_2d}$$
for some $a_1 , a_2>0$. This proves the third part of the theorem.

\emph{Proof of Theorem  \ref{thm:main}.4.}
Let $a>0$ and set $L=b\sqrt{d\log d}$. As in \eqref{proof_thm:main_eq1} we get
$$\mathbb{P}(A_L)\geq 1 - c_2b^{c_4}d^{c_3 + c_4 - b^2/3}.$$
If we pick $b$ and $d$ large enough, such that
$c_2b^{c_4}d^{c_3 + c_4 + a - b^2/3}\leq 1,$
then
$$\mathbb{P}(A_L)\geq 1 - d^{-a}.$$
This proves the fourth part of the theorem.
The proof of points (5) and (6) proceed along the same lines.
\end{proof}

%%%%%%%%%%%%%%%%%%%%%%%%%%%%%%%%%%%%%%%%%%%%%%%%%%%%%%%%%%%%
%%%%%%%%%%%%%%%%%%%%%%%%%%%%%%%%%%%%%%%%%%%%%%%%%%%%%%%%%%%%
%%%%%%%%%%%%%%%%%%%%%%%%%%%%%%%%%%%%%%%%%%%%%%%%%%%%%%%%%%%%
%%%%%%%%%%%%%%%%%%%%%%%%%%%%%%%%%%%%%%%%%%%%%%%%%%%%%%%%%%%%
%%%%%%%%%%%%%%%%%%%%%%%%%%%%%%%%%%%%%%%%%%%%%%%%%%%%%%%%%%%%
%%%%%%%%%%%%%%%%%%%%%%%%%%%%%%%%%%%%%%%%%%%%%%%%%%%%%%%%%%%%
%%%%%%%%%%%%%%%%%%%%%%%%%%%%%%%%%%%%%%%%%%%%%%%%%%%%%%%%%%%%
%%%%%%%%%%%%%%%%%%%%%%%%%%%%%%%%%%%%%%%%%%%%%%%%%%%%%%%%%%%%

\section{Maximal Configurations are rare events}\label{sec:maximal_is_rare}
In this section we provide a couple of examples of tail probabilities, proving  exponential rarefaction of maximal configurations. For a semialgebraic set $Y$, we denote by $b(Y)$ the sum of its Betti numbers (this sum is finite by semialgebraicity).

\begin{proposition}[Exponential rarefaction of maximal configurations]\label{thm:rare}Let $W\subseteq J^r(S^n, \R^m)$ be a singularity type. For every $C>0$ there exist $a_1, a_2>0$ such that
\beq \mathbb{P}\left(\left\{ p\in \mathcal{P}_{n, \underline{d}}\,|\,b(j^rp^{-1}(W))\geq Cd^n \right\}\right)\leq a_1e^{-a_2d}.\eeq
\end{proposition}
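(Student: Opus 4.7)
The plan is to combine the exponential rarefaction regime of Theorem \ref{thm:main}.3 with a deterministic Oleinik--Petrovskii--Thom--Milnor style bound on the Betti numbers of the type-$W$ singular locus of a polynomial of controlled degree. The point is that ambient isotopy preserves all Betti numbers, so a Kostlan polynomial of degree $d$ inherits the Betti number bound of its low-degree approximant whenever the approximation event holds.

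First, I would establish a deterministic bound of the form $b(j^r q^{-1}(W)) \leq C' L^n$ for any polynomial map $q$ of degrees at most $L$, where $C'$ depends only on $W$ (and on $n,m,r$). This is a standard semialgebraic estimate: the set $j^r q^{-1}(W) \subseteq S^n$ is cut out by a bounded number of polynomial equalities and inequalities (the number and ``shape'' fixed by $W$), in which the polynomial entries are derivatives of $q$ up to order $r$, hence have degree at most $L$. Applying Oleinik--Petrovskii--Thom--Milnor (or the Basu--Pollack--Roy refinement) to this semialgebraic description on $S^n$ yields $b(j^r q^{-1}(W)) \leq C' L^n$ with $C'=C'(W,n,m,r)$.

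Next, given $C>0$, choose $b\in(0,1)$ so small that $C' b^n < C$, and set $L := bd$ (replacing $bd$ by $\lfloor bd\rfloor$ with the appropriate parity correction so that $d-L\in 2\mathbb{N}$ for each component; this only changes constants). By Theorem \ref{thm:main}.3, there exist $a_1,a_2>0$ such that the low-degree approximation event $A_L$ satisfies $\mathbb{P}(A_L)\geq 1-a_1 e^{-a_2 d}$. On $A_L$, the pairs $(S^n,j^rp^{-1}(W))$ and $(S^n,j^r p|_L^{-1}(W))$ are ambient isotopic, so their Betti numbers coincide, and the deterministic bound applied to $q=p|_L$ gives
\begin{equation}
b(j^r p^{-1}(W)) \;=\; b(j^r p|_L^{-1}(W)) \;\leq\; C' L^n \;=\; C' b^n d^n \;<\; C d^n.
\end{equation}
Hence the event $\{b(j^r p^{-1}(W)) \geq C d^n\}$ is contained in the complement of $A_L$, and therefore has probability at most $a_1 e^{-a_2 d}$, which is the claim.

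The main obstacle is the first step: making sure that the semialgebraic description of $j^r q^{-1}(W)$ really leads to a bound of order $L^n$ with a constant independent of $d$. This requires treating $W$ as a fixed semialgebraic subset of the jet space (so that its defining polynomials contribute only to the constant), and treating the jet extension $j^r q$ as a semialgebraic map whose component functions have degrees linear in $L$. Once that is set up, the Betti number bound is a direct application of the classical estimates for semialgebraic sets in $S^n$, and the rest of the argument is a straightforward combination with Theorem \ref{thm:main}.3.
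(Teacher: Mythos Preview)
Your proposal is correct and follows essentially the same approach as the paper: both arguments combine a deterministic $O(L^n)$ Betti number bound for the type-$W$ singular locus of a degree-$L$ polynomial with the exponential regime of Theorem~\ref{thm:main}.3, using that ambient isotopy preserves Betti numbers. The only difference is that the paper imports the deterministic bound from \cite{LerarioStecconi} (stated there under the transversality hypothesis $j^rq\pitchfork W$, which holds almost surely), whereas you sketch it directly via an Oleinik--Petrovskii--Thom--Milnor argument on the semialgebraic description of $j^rq^{-1}(W)$; the paper's choice $L=\min\{d,\lfloor (C/C_1)^{1/n}d\rfloor+1\}$ and your choice $L=bd$ with $C'b^n<C$ are equivalent reformulations of the same idea.
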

\begin{proof}Let $W\subseteq J^{r}(S^n, \R^m)$. We recall that if $q:S^n\to \R^m$ is a polynomial map with degree bounded by $L$ such that $j^rq\pitchfork W$, the following estimate is proved in \cite{LerarioStecconi} for the sum of the Betti numbers of $j^rq^{-1}(W)$:
\beq \label{eq:estimated}b(j^rq^{-1}(W))\leq C_1 L^n.\eeq
In particular we see that if for $p\in \mathcal{P}_{n, \underline{d}}$ we have $j^rp\pitchfork W$ (which happens with probability 1 by \cite[Theorem 1, point (4)]{LerarioStecconi}) and $b(j^{r}p^{-1}(W))\geq C d^n$, then if moreover $p\in A_L$ (the stability event from Definition \ref{def:stabilityevent}) we must have $L\geq \left(\frac{C}{C_1}\right)^{1/n}d$. Set now:
\beq L=\min\left\{d, \left\lfloor \left(\frac{C}{C_1}\right)^{1/n}d\right\rfloor+1\right\}.\eeq
Then for every $p\in A_L$, we have
\beq b(j^rp^{-1}(W))=b(j^rp|_L^{-1}(W))< Cd^n.\eeq
In particular, we must have
\beq\label{eq:cont} \left\{b(j^rp^{-1}(W)\geq Cd^n\right\} \subseteq (A_L)^c\eeq
(the complement of the stability event $A_L$).
Point (3) of Theorem \ref{thm:main} implies that there exists constants $a_1, a_2>0$ such that $\mathbb{P}(A_L)\geq 1-a_1e^{-a_2d}$, which combined  with \eqref{eq:cont} gives:
\beq \mathbb{P}\left(\left\{b(j^rp^{-1}(W)\geq Cd^n\right\}\right)\leq a_1e^{-a_2d}.
\eeq
This proves the proposition.
\end{proof}

We can apply the previous result to the examples discussed in the Introduction.
\begin{enumerate}
\item In the case $W=S^n\times \{0\}\subset J^{0}(S^n, \R^m),$  Harnack's bound implies that $$b(j^0p^{-1}(W))=b(\{p=0\})\leq O(d^n)$$ and the probability that $b(\{p=0\})\geq C_3d^n$ is exponentially small. This example is extensively discussed in \cite{Antonio}.
\item In the case $W=\{j^1f=0\}\subset J^1(S^n, \R)$ then $j^1f^{-1}(W)$ is the set of critical points of $p:S^n\to \R$;  By \cite{CS} for a polynomial $P$ of degree $d$
\begin{equation}
\#j^1p^{-1}(W)\leq 2(d-1)^n+\dots+(d-1)+1
\end{equation}
(this bounds follows from complex algebrais geometry), and this estimate was recently proved to be sharp by Kozhasov \cite{Kozhasov}.
In this context \cite[Theorem 14]{LerarioStecconi} tells that the expected number of critical points is of the order $\Theta(d^{n/2})$ and Theorem \ref{thm:rare} implies that for every $C>0$ there exists $a_1, a_2>0$ such that the probability that a random polynomial of degree $d$ has at least $C d^{n}$ many critical points is smaller than $a_1 e^{-a_2 d}$.
\item $W=\{\mathrm{d}f=0, \mathrm{d}^2f>0\}\subset J^2(S^n, \R),$ and $j^2f^{-1}(W)$ is the set of non-degenerate \emph{minima} of $f:S^n\to \R$; the same argument can be applied here: the expectation of the number of minima is of the order $\Theta(d^{n/2})$ (\cite[Theorem 14]{LerarioStecconi}). There are polynomials of degree $d$ with $\Theta(d^{n})$ many non-degenerate minima, but the measure of the sets of such polynomials becomes exponentially small ad $d$ grows.
\item $W=\{\textrm{Whitney cusps}\}\subset J^3(S^2, \R^2),$ and $j^{3}f^{-1}(W)$ is the set of points where $f:S^2\to \R^2$ has critical points of type Whitney cusp, i.e. in some coordinates near this point the map has the form $(x_1, x_2)\mapsto(x_1, x_2^3-x_1x_2)$, see \cite{Callahan} (we need a condition on the third jet in order to make sure that this local form exists). The number of such cusps, for a polynomial map of degree $d$ is $O(d^2)$. On average there are $\Theta(d)$ many of them and the probability of having $\Theta(d^2)$ Whitney cusps becomes exponentially small as $d$ grows.
\end{enumerate}
\begin{remark}By applying points (1) and (2) from Theorem \ref{thm:main} one can obtain other similar tail probabilities. For example, choosing $L=b\sqrt{d\log d}$ as in Theorem \ref{thm:main} we get that
\beq b(Z(p))\leq O((d\log d)^{n/2})\quad \textrm{with $\mathbb{P}\to 1$ as $d\to \infty$}.\eeq
This result follows using the same ideas as before, combined with the deterministic estimate $b(Z(Q))\leq O(L^n)$ for the sum of the Betti numbers of the zero set of a polynomial $Q$ of degree $L$ in projective space (see \cite{Milnor}).
\end{remark}

\bibliographystyle{alpha}
\bibliography{stabilityrandompoly}

\end{document}